
\documentclass[letterpaper, 10 pt, conference]{ieeeconf}  

\IEEEoverridecommandlockouts                              

\overrideIEEEmargins                                      



\usepackage{graphics} 
\usepackage{epsfig} 
\usepackage{mathptmx} 
\usepackage{times} 
\usepackage{amsmath} 
\usepackage{amssymb}  
\usepackage{verbatim}
\usepackage[ruled,linesnumbered]{algorithm2e}
\RestyleAlgo{ruled}

\newtheorem{theorem}{Theorem}
\newtheorem{proposition}{Proposition}

\newtheorem{remark}{Remark}
\newtheorem{corollary}{Corollary}

\newtheorem{definition}{Definition}

\newtheorem{example}{Example}

\usepackage{color}

\newcommand{\bR} { {\mathbb R}}
\newcommand{\bC} { {\mathbb C}}

\title{\LARGE \bf
Partial Eigenvalue Assignment for Nonlinear Systems
}

\author{Shang Wang, Xiaodong Cheng, Yu Kawano and Peter van Heijster
\thanks{This work of Kawano was supported in part by JST FOREST Program Grant Number JPMJFR222E and JSPS KAKENHI Grant Number 21H04875.}
\thanks{S. Wang, X. Cheng and P. van Heijster are with Mathematical and Statistical Methods Group (Biometris), Department of Plant Science, Wageningen University \& Research, 6700 AA Wageningen, The Netherlands.
{\tt\small \{shang.wang,xiaodong.cheng,peter.vanheijster\}
@wur.nl}}
\thanks{Y. Kawano is with the Graduate School of Advanced Science and Engineering, Hiroshima University, Higashi-Hiroshima 739-8527, Japan. {\tt\small ykawano@hiroshima-u.ac.jp}}
}

\begin{document}

\maketitle
\thispagestyle{empty}
\pagestyle{empty}

\begin{abstract}
    In this paper, we study control design methods for assigning a subset of nonlinear right or left eigenvalues to a specified set of scalar-valued functions via nonlinear Sylvester equations. This framework can be viewed as a generalization of partial linear eigenvalue assignment (also referred to as partial pole placement) for linear systems. First, we propose a method for partial nonlinear right eigenvalue assignment via state feedback using a nonlinear Sylvester equation and a condition for preserving an open-loop nonlinear right eigenvalue. This method can be applied to partial stabilization of nonlinear systems. Then, as the dual problem, we present a method for partial nonlinear left eigenvalue assignment via the dual nonlinear Sylvester equation and a condition for preserving an open-loop nonlinear left eigenvalue, which can be applied to partial observer design for nonlinear system.

\end{abstract}
Index terms: Nonlinear systems, Nonlinear eigenvalues, Pole placement, Stabilization, Observer design

\section{Introduction}
\label{sec:introduction}
Eigenvalue assignment plays a fundamental role in controller design, since the eigenvalues of a dynamical system determine various critical properties, e.g., stability, convergence rate, and transient responses. In the context of linear systems, eigenvalue or pole placement via state feedback is a well-established technique, with methods such as the Ackermann's formula via the coefficient matching of characteristic polynomials \cite{ackermann1972entwurf,nordstrom1997multi} and methods using the linear Sylvester equation \cite{BHATTACHARYYA1982pole}. 
While full eigenvalue assignment aims to prescribe the entire closed-loop spectrum, partial eigenvalue placement has emerged as an important strategy when the control objective is to shape only a subset of closed-loop eigenvalues. This approach offers flexibility to address secondary objectives, such as disturbance rejection and optimal control, by exploiting the degrees of freedom associated with the unspecified spectrum. For example, \cite{ram2013multiple} shows the application of partial eigenvalue placement in designing controllers for vibration suppression, which is extended to enable simultaneous partial and regional eigenvalue placement in \cite{datta2010partial,datta2012partial}. Our recent work \cite{wang2024partial} develops a systematic framework for partial eigenvalue placement in linear systems, which shows how these unused degrees of freedom can be leveraged for auxiliary design objectives.


Many dynamical systems in real-world applications are inherently nonlinear, for which linear eigenvalue assignment methods are often applied after linearization around equilibrium points, see e.g., \cite{rao1981advanced}. However, the linearization-based method may fail to capture essential nonlinear dynamics, potentially giving misleading conclusions about the system's behavior.
Furthermore, controllers designed based on linearized models may result in performance degradation or even instabilities when strong nonlinearities dominate \cite{zhu2002pole}. To address this issue, several methods have been proposed to extend eigenvalue assignment to nonlinear systems without relying on linearization. For instance, \cite{zhu2002pole} introduces a nonlinear pole placement method for systems whose outputs are polynomial functions of past inputs and outputs, and \cite{tomas2004pole} presents a pole placement method based on approximating nonlinear dynamics with a sequence of linear time-varying models.
However, these approaches are restricted to a particular class of nonlinear systems and rely on strong structural assumptions or approximations.

In contrast, this paper aims to provide a more fundamental approach  involving the concept of nonlinear eigenvalues of nonlinear systems, which neither applies linearization methods nor requires structural assumptions or approximations. 
The works \cite{KAWANO2013606,halas2013definition} introduce a concept of nonlinear eigenvalues and eigenvectors.
This definition is formalized in \cite{kawano2015stability,kawano2017pbh,kawano2017nonlinear} to include both left and right nonlinear eigenstructures. 
Importantly, these nonlinear eigenvalues can also indicate fundamental properties of nonlinear systems, such as equilibrium stability and transient behavior, similar to the linear case. For instance, \cite{kawano2015stability} establishes an asymptotic stability criterion based on the nonlinear eigenstructure of a diagonalizable system. This naturally raises the question: \textit{can we design controllers to assign nonlinear eigenvalues and thereby shape the dynamic behavior of nonlinear systems?}

To the best of our knowledge, general methods for nonlinear eigenvalue assignment remain largely unexplored.
A related but distinct line of work involves immersion-based observer and controller design. In \cite{kazantzis1998nonlinear}, an observer design method is proposed that assigns the eigenvalues of the error dynamics for nonlinear systems by immersing them into a linear system via a state transformation. This method, however, requires the existence of a suitable immersion mapping and does not enable the assignment of nonlinear eigenvalues defined in \cite{kawano2015stability,kawano2017pbh,kawano2017nonlinear}. 
Moreover, \cite{astolfi2003immersion} presents a nonlinear stabilization method by immersing an exo-system into the closed-loop dynamics. While the asymptotic behavior of the exo-system is inherited by the closed-loop system under projection, this approach is not explicitly designed for shaping the nonlinear eigenstructure of nonlinear systems. Note that both methods in \cite{kazantzis1998nonlinear} and \cite{astolfi2003immersion} utilize the nonlinear Sylvester equation derived from the immersion condition. This equation also serves as a key tool in addressing several well-known problems in nonlinear control, e.g., the output regulation problem \cite{isidori1990output}.



These limitations motivate us to develop a constructive framework that can explicitly change the nonlinear eigenstructure of nonlinear dynamical systems. The main contributions are summarized as follows. First, we develop a constructive method for assigning a subset of the nonlinear right eigenvalues of a nonlinear system via state feedback. The approach is based on solving a nonlinear Sylvester equation that arises from the immersion condition in \cite{astolfi2003immersion}, combined with the concept of nonlinear eigenvalues in \cite{kawano2015stability,kawano2017pbh,kawano2017nonlinear}. Furthermore, we derive a condition to preserve selected open-loop nonlinear eigenvalues and eigenvectors in the closed-loop system, thereby enabling the achievement of a secondary control objective. 
Second, the framework is extended to address the dual problem of partial nonlinear left eigenvalue assignment via the dual nonlinear Sylvester equation. The results are applicable to the synthesis of partial observer and partial stabilization by output-feedback control.

The remainder of this paper is organized as follows. Section~\ref{sec:pre} introduces necessary preliminaries for the subsequent developments. In Section~\ref{sec:example}, an example is presented to motivate our research of nonlinear eigenvalue assignment. Section~\ref{sec:main} outlines the framework for partial nonlinear right eigenvalue assignment for nonlinear systems, and Section~\ref{sec:dual} extends the results to partial nonlinear left eigenvalue assignment. Finally, conclusion remarks are made in Section~\ref{sec:conclusion}.

\subsubsection*{Notation} 
The fields of real numbers and complex numbers are denoted as $\bR$ and $ \bC$, respectively. 
The zero matrix of dimension $n\times m$ is denoted by $0_{n\times m}$.
The transpose of a matrix $A$ is denoted by $A^\top$.
For a complex matrix $A$, its real part and imaginary part are denoted by $\mathrm{Re}(A)$ and $\mathrm{Im}(A)$, respectively.
The set of eigenvalues of a square matrix $A$ is denoted by $\sigma (A)$.
The set of functions which are at least $k$ times continuously differentiable on the domain of definition is denoted by $C^k$. 
The Lie bracket of two differentiable vector fields $v,s:\bR^\nu \to  \bC^\nu$ is denoted by
\begin{align*}
[v(w),s(w)] := \frac{\partial s(w)}{\partial w}v(w) - \frac{\partial v(w)}{\partial w} s(w).
\end{align*}




\section{Preliminaries}\label{sec:pre}
\subsection{Review of Partial Eigenvalue Assignment for Linear Systems}
First, we briefly review our previous work \cite{wang2024partial} on partial eigenvalue assignment via Sylvester equations for linear systems. Consider the following controllable linear time-invariant (LTI) system
\begin{align}
\label{eq:lsys}
    \dot{x}(t) = Ax(t)+Bu(t), \quad t\geq 0,
\end{align}
where $A \in \bR^{n \times n}$ and $B \in \bR^{n\times m}$ with $m \leq n$. For the sake of simplicity, the time variable $t$ is omitted hereafter. 

Applying the state feedback $u=Kx$, the closed-loop system becomes
$
    \dot{x} = (A + B K)x.
$
To specify a part of closed-loop eigenvalues, we introduce two matrices $L\in\bR^{m\times \nu}$ and $S\in\bR^{\nu\times \nu}$, $\nu \le n$. If $\sigma(S)\cap\sigma(A)=\emptyset$ and $(L,S)$ is observable, then the following linear Sylvester equation has a unique and full rank solution $\Pi\in\bR^{n\times \nu}$, e.g., \cite{bartels1972solution,souza1981controllability}:
\begin{align}\label{eq:lsyl}
    A\Pi+BL=\Pi S.
\end{align}

Utilizing $\Pi$, partial eigenvalue assignment can be achieved as follows.
\begin{proposition}\cite[Theorem~1]{wang2024partial}\label{prop:linear}
If there exist $K \in \bR^{m \times n}$, $L \in \bR^{m\times \nu}$, and $S\in\bR^{\nu\times \nu}$ such that the linear Sylvester equation~\eqref{eq:lsyl} admits a unique and full rank solution $\Pi\in\bR^{n\times \nu}$, and $L = K \Pi$ holds, then we have $\sigma(S)\subset \sigma(A + BK)$.
    
\end{proposition}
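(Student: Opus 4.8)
The plan is to use the identity $L = K\Pi$ to collapse the linear Sylvester equation~\eqref{eq:lsyl} into an intertwining relation for the closed-loop matrix $A + BK$, and then read off the spectral inclusion from the full-rank property of $\Pi$. First I would substitute $L = K\Pi$ into~\eqref{eq:lsyl} to obtain
\begin{align*}
    A\Pi + BK\Pi = \Pi S, \qquad \text{equivalently} \qquad (A+BK)\,\Pi = \Pi S .
\end{align*}
Geometrically, this states that the column space of $\Pi$ is invariant under $A+BK$ and that $A+BK$ restricted to this subspace is represented by $S$, which already suggests $\sigma(S)\subset\sigma(A+BK)$.

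To make this precise, I would invoke the hypothesis that $\Pi\in\bR^{n\times\nu}$ has full rank. Since $\nu\le n$, this means $\Pi$ has full column rank, so $\Pi v \neq 0$ whenever $v \neq 0$. Now pick any $\lambda\in\sigma(S)$ together with a (possibly complex) eigenvector $v\neq 0$ satisfying $Sv=\lambda v$. Multiplying the intertwining relation on the right by $v$ gives
\begin{align*}
    (A+BK)\,\Pi v = \Pi S v = \lambda\, \Pi v ,
\end{align*}
and $\Pi v\neq 0$ by injectivity of $\Pi$. Hence $\Pi v$ is an eigenvector of $A+BK$ with eigenvalue $\lambda$, so $\lambda\in\sigma(A+BK)$; since $\lambda$ was arbitrary, $\sigma(S)\subset\sigma(A+BK)$.

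I do not expect a genuine obstacle here: once $L = K\Pi$ is substituted, the conclusion is essentially immediate. The only points needing minor care are the interpretation of full rank as full \emph{column} rank (which is where $\nu\le n$ enters) to ensure $\Pi$ is injective, and allowing $\lambda$ and $v$ to be complex when working with $\sigma(\cdot)\subset\bC$. Note that the existence-and-uniqueness conditions on $\Pi$ (namely $\sigma(S)\cap\sigma(A)=\emptyset$ and observability of $(L,S)$) are not used in this direction of the argument — they only serve to guarantee that the hypotheses can be satisfied — so I would not dwell on them in the proof itself.
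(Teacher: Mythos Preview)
Your argument is correct and is the standard one: substitute $L=K\Pi$ to get $(A+BK)\Pi=\Pi S$, then use full column rank of $\Pi$ to push eigenvectors of $S$ to eigenvectors of $A+BK$. The paper does not actually give its own proof of this proposition; it simply cites \cite[Theorem~1]{wang2024partial}, so there is nothing further to compare against.
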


\begin{remark}
    If $\sigma(S)\cap\sigma(A)\neq\emptyset$, the Sylvester equation \eqref{eq:lsyl} may have infinitely many solutions. In this case, if one solution $\Pi$ is of full rank, partial eigenvalue assignment can still be achieved with $K$ satisfying $L=K\Pi$.
    
\end{remark}

One can utilize the flexibility of the $n-\nu$ unspecified eigenvalues to achieve different control goals. For example, the unspecified eigenvalues can be designed as a subset of open-loop eigenvalues, e.g., \cite[Corollary~2]{wang2024partial}.


\subsection{Nonlinear Eigenvalues for Nonlinear Systems}
The concepts of eigenvalues and eigenvectors of linear systems have been generalized to nonlinear systems in, e.g., \cite{KAWANO2013606,halas2013definition,kawano2015stability,kawano2017pbh,kawano2017nonlinear}. In this subsection, we tailor these definitions to accommodate with the framework of partial nonlinear eigenvalue placement.

Consider the nonlinear autonomous system described by
\begin{align}
\label{eq:exo}
    \dot{w} = s(w),
\end{align}
where $s:\bR^\nu \to \bR^\nu$ is of class $C^1$ such that $s(0) = 0$. 

Its nonlinear right eigenvector is defined as a one-dimensional invariant distribution with respect to \eqref{eq:exo} as follows.

\begin{definition}
\label{def:reig}
    Let $D_w\subseteq\bR^\nu$.
    The class $C^0$ function $\lambda:D_w \to  \bC$ and class $C^1$ function $v: D_w \to  \bC^\nu \setminus\{0_{\nu \times 1}\}$ are respectively said to be a \emph{nonlinear right eigenvalue} and \emph{nonlinear right eigenvector} of \eqref{eq:exo} on $D_w$ if
    \begin{align}
    \label{eq:reig}
    [v(w),s(w)] =\lambda(w) v(w), 
    \end{align}
    holds for all $w \in D_w$.
    
\end{definition}

Similarly, the nonlinear left eigenvector is defined as a one-dimensional invariant codistribution with respect to \eqref{eq:exo} as follows.

\begin{definition}
\label{def:leig}
    Let $D_w\subseteq\bR^\nu$.
    The class $C^0$ function $\lambda:D_w \to \bC$ and class $C^1$ function $v:D_w \to  \bC^\nu\setminus\{0_{\nu \times 1}\}$ are respectively said to be a \emph{nonlinear left eigenvalue} and \emph{nonlinear left eigenvector} of \eqref{eq:exo} on $D_w$ if
    \begin{align}
    \label{eq:leig}
    v^\top (w)\frac{\partial s(w)}{\partial w} + \left(\frac{\partial v(w)}{\partial w}s(w)\right)^\top = \lambda(w)v^\top (w)
    \end{align}
    holds for all $w \in D_w$.
    
\end{definition}

Differently from \cite[Definition~2,3]{kawano2015stability}, nonlinear eigenvalues and their associated eigenvectors considered here are not necessarily to be complex analytic and are defined on a subset $D_w\subseteq \bR^\nu$.

\begin{remark}
Definitions~\ref{def:reig} and \ref{def:leig} can be regarded as generalizations of conventional eigenvalues and (left and right) eigenvectors of linear systems, since the conventional linear eigenvalues and eigenvectors of a linear system also satisfy Definitions~\ref{def:reig} and \ref{def:leig}. However, differently from linear eigenvalues, there are infinitely many nonlinear eigenvalues even for linear systems caused by $\delta_f$-conjugacy \cite[Section II.B]{kawano2017nonlinear}. In the set of a $\delta_f$-conjugate eigenvalues, an important eigenvalue for stability analysis is the one which corresponds to an integrable eigenvector~\cite{kawano2015stability}.~

\end{remark}

\section{Motivating Example}
\label{sec:example}

In this section, we present a motivating example to illustrate the need for partial nonlinear eigenvalue assignment in nonlinear systems and to highlight the limitations of linearization-based analysis. Consider the following two-dimensional nonlinear system
\begin{equation}
    \label{eq:ex_sys_1}
    \dot{x} =  \begin{bmatrix}
        -x_1+2x_1x_2\\-x_2-2x_1^2+2x_2^2
    \end{bmatrix}+\begin{bmatrix}
        x_1x_2\\-x_1^2+x_2^2
    \end{bmatrix}u.
\end{equation}
The open-loop nonlinear right eigenvalues and eigenvectors of this system, based on the definition in \cite{kawano2015stability}, are given by:
\begin{align*}
\begin{aligned}
\lambda_{o,1}(x) &= -1+2x_2+2ix_1,
& 
v_{o,1}(x) &= \begin{bmatrix}
-i\\1
\end{bmatrix}\\ 
\lambda_{o,2}(x) &= -1+2x_2-2ix_1, 
& 
v_{o,2}(x) &= \begin{bmatrix}
    i\\1
\end{bmatrix},
\end{aligned}
\end{align*}
where $i$ denotes the imaginary unit. We now shape the nonlinear eigenvalues by designing a simple feedback controller of the form $u=b \geq 0$, with $b$  a constant gain, which results in the closed-loop system as
\begin{equation}
\label{eq:ex_sys_1_cl}
    \dot{x} =  \begin{bmatrix}
        -x_1+(2+b)x_1x_2\\-x_2-(2+b)(x_1^2-x_2^2)
    \end{bmatrix}.
\end{equation}
The linearized system of \eqref{eq:ex_sys_1_cl} at the origin does not depend on $b$, and thus provides no insight into how the control input affects the system dynamics. Instead, we examine the nonlinear eigenvalues of the closed-loop system, which are:
\begin{align*}
\begin{aligned}
\lambda_{c,1}(x) &= -1+(2+b)x_2+i(2+b)x_1,
& 
v_{c,1}(x) &= \begin{bmatrix}
-i\\1
\end{bmatrix}\\ 
\lambda_{c,2}(x) &= -1+(2+b)x_2-i(2+b)x_1, 
& 
v_{c,2}(x) &= \begin{bmatrix}
    i\\1
\end{bmatrix}.
\end{aligned}
\end{align*}
Figure~\ref{fig:1} shows the system responses for different values of $b$ under the initial condition $x(0) = \begin{bmatrix}
    1&1
\end{bmatrix}^\top$. It can be inferred that increasing $b$
generally leads to larger overshoots and shorter settling time in this example.
This suggests that, analogously to eigenvalues of linear systems, nonlinear eigenvalues can serve as an indicator of transient behavior. Furthermore, as shown in \cite[Theorem~5]{kawano2015stability}, the stability of an equilibrium point in a nonlinear system can also be inferred from its nonlinear eigenvalues. Therefore, reshaping the nonlinear eigenstructure through feedback offers an effective way to change both stability and transient performance of a nonlinear system.

This example motivates the need for a systematic method to assign nonlinear eigenvalues, which is a problem we refer to as nonlinear eigenvalue assignment. In the next section, we develop a detailed design method for this control objective.



\begin{figure}[t]
    \centering
    \includegraphics[width=0.45\textwidth]{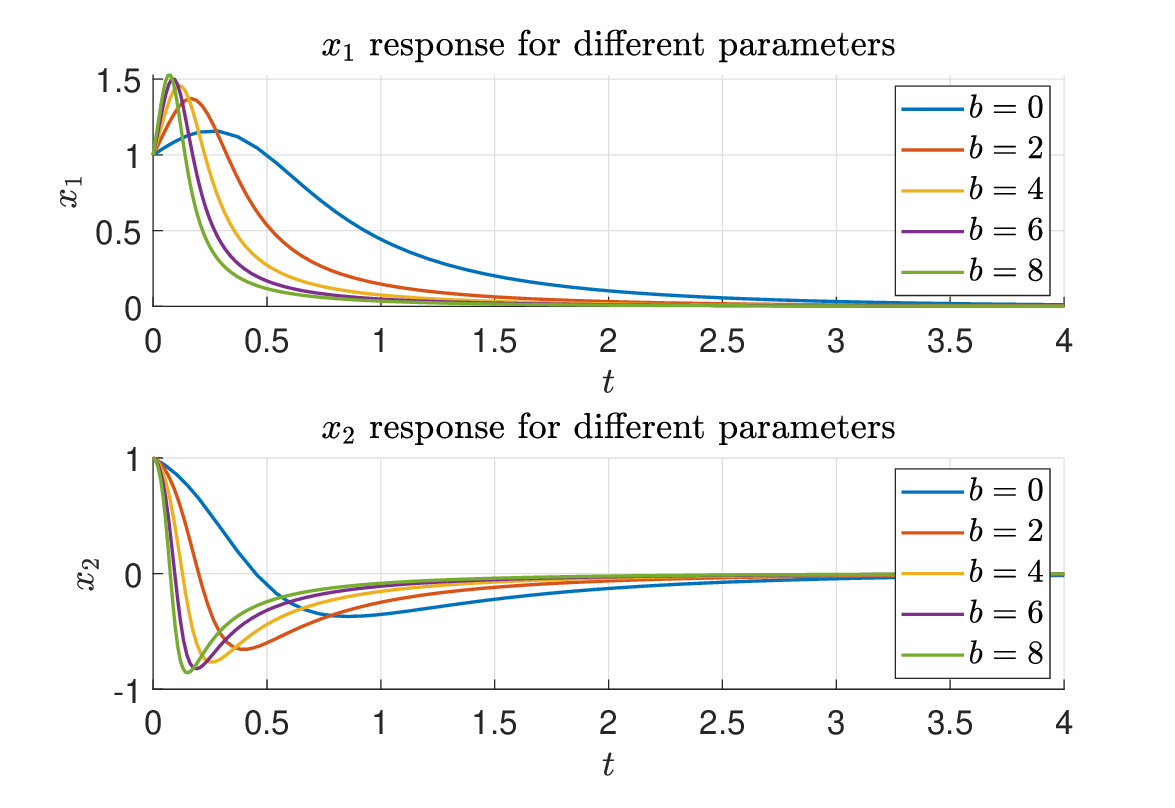}
    \caption{Response of the system \eqref{eq:ex_sys_1} for different values of $b$}
    \label{fig:1}
\end{figure}


%

    
%




\section{Partial Right Eigenvalue Assignment}
\label{sec:main}

In this section, we extend the framework of partial nonlinear eigenvalue assignment to nonlinear systems, utilizing nonlinear right eigenvalues and their associated right eigenvectors. In particular, we derive sufficient conditions for partial nonlinear eigenvalue assignment based on a nonlinear Sylvester equation arising from an immersion condition. Additionally, we derive a condition for preserving specified open-loop nonlinear eigenvalues and eigenvectors in the closed-loop system. Based on the stability results in~\cite{kawano2015stability}, we also discuss partial stabilization using the proposed method.


\subsection{Main Results}
\label{sec:main_results}

Consider the following nonlinear input-affine system:
\begin{align}
\label{eq:sys_x}
    \dot{x} = f(x)+g(x)u,
\end{align}
where $f:\bR^n \to \bR^n$ and $g:\bR^n \to \bR^{n \times m}$ are of class $C^1$ such that $f(0) = 0$. For nonlinear state feedback $u=k(x)$, the closed-loop system is
\begin{align}\label{eq:sys_cl}
\dot{x}=\tilde{f}(x):=f(x)+g(x)k(x),
\end{align}
where $k:\bR^n \to \bR^m$ is of class $C^1$ such that $k(0) = 0$.

Next, we introduce a nonlinear extension of the Sylvester equation~\eqref{eq:lsyl}.
Let $l:\bR^\nu \to \bR^m$ be of class $C^1$ such that $l(0) = 0$. Then, the nonlinear Sylvester equation, e.g.,~\cite{astolfi2003immersion,scarciotti2024interconnection}, is the following nonlinear partial differential equation (PDE) with respect to $\pi (w)$:
\begin{align}
\label{eq:syl}
f(\pi(w)) + g(\pi(w))l(w)=\frac{\partial\pi (w)}{\partial w}s(w).
\end{align}

As the first main result, we extend Proposition~\ref{prop:linear} for partial eigenvalue assignment to nonlinear systems as follows.

\begin{theorem}
\label{thm:main}
    Suppose that there exist class $C^1$ functions $k:\bR^n \to \bR^m$, $l:\bR^\nu \to \bR^m$, and $s:\bR^\nu \to \bR^\nu$ with $k(0) = 0$, $l(0)=0$, and $s(0)=0$, satisfying the following three conditions:
    \begin{enumerate}
    \item the exo-system \eqref{eq:exo} admits a nonlinear right eigenvalue $\lambda (w)$ and its associated nonlinear right eigenvector $v(w)$ on $\bR^\nu$;
    
    \item the nonlinear Sylvester equation~\eqref{eq:syl} has a class $C^1$ solution $\pi(w)$ on $\bR^\nu$ such that $\pi(0)=0$ and 
    \begin{align}
    \label{eq:pep}
        k(\pi(w)) = l(w)
    \end{align}
    holds on $\bR^\nu$;
    
    \item there exist $\Tilde{\lambda}:\bR^n \to  \bC$ of class $C^0$ and $\Tilde{v}:\bR^n \to  \bC^n$ of class $C^1$ such that 
   \begin{subequations}\label{cond1:main}
    \begin{align}
    \Tilde{\lambda}(\pi(w)) &= \lambda(w) \\
    \tilde{v}(\pi(w)) &= \frac{\partial \pi (w)}{\partial w} v(w)
    \label{cond12:main}
    \end{align}
    \end{subequations}
    hold on $\bR^\nu$, and $\tilde{v}(\pi(w)) \neq 0_{\nu \times 1}$ for all $w \in \bR^\nu$.
    \end{enumerate}
    Then, $\tilde{\lambda}(x)$ is a nonlinear right eigenvalue of the closed-loop system \eqref{eq:sys_cl} on the projective space $\pi (\bR^\nu):=\{x:x=\pi(w),\ w\in\bR^\nu\}$ with its associated nonlinear right eigenvector $\tilde{v}(x)$.
\end{theorem}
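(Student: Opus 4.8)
The plan is to verify directly that the candidate pair $(\tilde\lambda,\tilde v)$ satisfies the defining identity \eqref{eq:reig} of a nonlinear right eigenvalue for the closed-loop vector field $\tilde f$ at every point $x=\pi(w)$ of the projective space $\pi(\bR^\nu)$. The natural strategy is to push the known eigen-relation $[v(w),s(w)]=\lambda(w)v(w)$ on the exo-system forward through the immersion map $\pi$, using the nonlinear Sylvester equation \eqref{eq:syl} together with the feedback-matching condition \eqref{eq:pep} to identify the image of $s$ under $d\pi$ with the closed-loop field $\tilde f$ evaluated along $\pi$.

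First I would record the consequence of conditions (2) and \eqref{eq:pep}: substituting $k(\pi(w))=l(w)$ into \eqref{eq:syl} gives
\begin{align*}
\frac{\partial\pi(w)}{\partial w}\,s(w) = f(\pi(w)) + g(\pi(w))k(\pi(w)) = \tilde f(\pi(w)),
\end{align*}
so $\pi$ intertwines $s$ and $\tilde f$. Next I would differentiate both sides of this intertwining relation along $w$ — or, more cleanly, differentiate the two relations in \eqref{cond1:main} — and use the chain rule to relate the Jacobian of $\tilde v$ at $x=\pi(w)$ to derivatives of $\pi$ and $v$. Concretely, differentiating $\tilde v(\pi(w)) = \tfrac{\partial\pi(w)}{\partial w}v(w)$ yields an expression for $\tfrac{\partial\tilde v}{\partial x}\big|_{\pi(w)}\cdot\tfrac{\partial\pi(w)}{\partial w}$, and differentiating the intertwining relation yields an expression for $\tfrac{\partial\tilde f}{\partial x}\big|_{\pi(w)}\cdot\tfrac{\partial\pi(w)}{\partial w}$. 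The goal is to assemble these into the Lie bracket $[\tilde v,\tilde f]$ evaluated at $\pi(w)$ and show it equals $\tilde\lambda(\pi(w))\tilde v(\pi(w)) = \lambda(w)\,\tfrac{\partial\pi(w)}{\partial w}v(w)$.

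The key algebraic identity I expect to need is that the Lie bracket is natural under $\pi$-relatedness: if $\pi$ maps the field $s$ to $\tilde f$ and maps $v$ to $\tilde v$ (both in the $d\pi$ sense encoded by \eqref{cond12:main} and the intertwining relation), then $d\pi\big([v,s]\big) = [\tilde v,\tilde f]\circ\pi$. Granting this, the computation closes immediately: $[\tilde v,\tilde f](\pi(w)) = \tfrac{\partial\pi(w)}{\partial w}[v,s](w) = \tfrac{\partial\pi(w)}{\partial w}\lambda(w)v(w) = \tilde\lambda(\pi(w))\,\tfrac{\partial\pi(w)}{\partial w}v(w) = \tilde\lambda(\pi(w))\tilde v(\pi(w))$, which is exactly \eqref{eq:reig} for the closed-loop system restricted to $\pi(\bR^\nu)$. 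The nonvanishing hypothesis $\tilde v(\pi(w))\neq 0$ and the $C^0$/$C^1$ regularity assumptions then certify that $(\tilde\lambda,\tilde v)$ genuinely qualifies as a nonlinear right eigenvalue/eigenvector pair in the sense of Definition~\ref{def:reig}.

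The main obstacle is the careful justification of the naturality-of-the-Lie-bracket step when $\pi$ is merely a $C^1$ immersion-type map rather than a diffeomorphism: one cannot simply invoke $\pi_*$ as a pushforward, since $\pi$ need not be injective or of full rank, and $\tilde v$, $\tilde f$ are only \emph{defined} on $\pi(\bR^\nu)$ a priori (with their values off the image irrelevant). I would handle this by avoiding any appeal to abstract pushforwards and instead working entirely with the explicit chain-rule computation: expand $[\tilde v,\tilde f](\pi(w))$ using $\tfrac{\partial\tilde f}{\partial x}\tilde v - \tfrac{\partial\tilde v}{\partial x}\tilde f$, substitute the differentiated forms of \eqref{cond1:main} and the intertwining relation, and check that all terms involving second derivatives of $\pi$ cancel in pairs (this cancellation is the routine-but-essential calculation), leaving precisely $\tfrac{\partial\pi(w)}{\partial w}[v,s](w)$. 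A minor subtlety to flag is that condition (3) only pins down $\tilde\lambda$ and $\tilde v$ on $\pi(\bR^\nu)$, so the conclusion is correspondingly local to that set — which is exactly what the theorem claims — and one should note that the identity \eqref{eq:reig} at a point $x=\pi(w)$ only involves the values and first derivatives of $\tilde v,\tilde f$ at that point, all of which are determined by the hypotheses.
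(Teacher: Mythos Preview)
Your proposal is correct and follows essentially the same route as the paper: establish the intertwining relation $\tfrac{\partial\pi}{\partial w}s=\tilde f\circ\pi$ from \eqref{eq:syl} and \eqref{eq:pep}, expand $[\tilde v,\tilde f]|_{x=\pi(w)}$ term by term via the chain rule, verify that the mixed second-derivative terms in $\pi$ cancel, and conclude $[\tilde v,\tilde f]\circ\pi=\tfrac{\partial\pi}{\partial w}[v,s]=\lambda\,\tfrac{\partial\pi}{\partial w}v=\tilde\lambda\,\tilde v\circ\pi$. The paper's proof carries out exactly this explicit computation, including the symmetry argument $\sum_{i,j}\tfrac{\partial^2\pi}{\partial w_i\partial w_j}s_iv_j=\sum_{i,j}\tfrac{\partial^2\pi}{\partial w_i\partial w_j}v_is_j$ that you anticipated as the ``routine-but-essential'' cancellation.
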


\begin{proof}
We have that
\begin{align}\label{pf1:main}
&[\tilde{v}(x),\tilde f(x)]|_{x = \pi (w)} \nonumber\\
&= \frac{\partial \tilde f(\pi (w))}{\partial x} \tilde v(\pi (w)) - \frac{\partial \tilde v(\pi (w))}{\partial x} \tilde f(\pi (w))
\end{align}
with $\tilde{f}$ defined in \eqref{eq:sys_cl}.
Applying~\eqref{cond12:main}, \eqref{eq:pep}, \eqref{eq:sys_cl}, and \eqref{eq:syl} in order, the first term of the right-hand side can be rearranged as
\begin{align*}
&\frac{\partial \tilde f(\pi (w))}{\partial x} \tilde v(\pi (w)) \\
&= \frac{\partial \tilde f(\pi (w))}{\partial x} \frac{\partial \pi (w)}{\partial w} v(w) \\
&=\frac{\partial \tilde f(\pi (w))}{\partial w} v(w) \\
&= \frac{\partial \pi (w)}{\partial w} \frac{\partial s(w)}{\partial w} v(w)
+ \left( \sum_{i=1}^\nu \frac{\partial}{\partial w_i} \left(\frac{\partial \pi (w)}{\partial w}\right) s_i(w) \right) v(w).
\end{align*}
Similarly, applying \eqref{eq:sys_cl}, \eqref{eq:pep}, \eqref{eq:syl}, \eqref{cond12:main} in order, the second term can be rearranged as
\begin{align*}
&\frac{\partial \tilde v(\pi (w))}{\partial x} \tilde f(\pi (w)) \\
&=\frac{\partial \tilde v(\pi (w))}{\partial x} \frac{\partial \pi (w)}{\partial w}s(w) \\
&= \frac{\partial \tilde v(\pi (w))}{\partial w} s(w) \\
&= \frac{\partial \pi (w)}{\partial w} \frac{\partial v(w)}{\partial w} s(w) 
+ \left( \sum_{i=1}^\nu \frac{\partial}{\partial w_i} \left(\frac{\partial \pi (w)}{\partial w}\right) v_i(w) \right) s(w).
\end{align*}
Substituting these two into~\eqref{pf1:main} yields 
\begin{align*}
&[\tilde{v}(x),\tilde f(x)]|_{x = \pi (w)} \\
&= \frac{\partial \pi (w)}{\partial w} [v(w), s(w)] \\
&\qquad +\left( \sum_{i=1}^\nu \frac{\partial}{\partial w_i} \left(\frac{\partial \pi (w)}{\partial w}\right) s_i(w) \right) v(w) \\
&\qquad - \left( \sum_{i=1}^\nu \frac{\partial}{\partial w_i} \left(\frac{\partial \pi (w)}{\partial w}\right) v_i(w) \right) s(w).
\end{align*}
Note that 
\begin{align*}
&\left( \sum_{i=1}^\nu \frac{\partial}{\partial w_i} \left(\frac{\partial \pi (w)}{\partial w}\right) s_i(w) \right) v(w) \\
&= \sum_{i=1}^\nu \sum_{j=1}^\nu \frac{\partial^2 \pi (w)}{\partial w_i w_j} s_i(w) v_j(w)\\
&=\left( \sum_{i=1}^\nu \frac{\partial}{\partial w_i} \left(\frac{\partial \pi (w)}{\partial w}\right) v_i(w) \right) s(w).
\end{align*}
Therefore, it follows from~\eqref{eq:reig} and~\eqref{cond1:main} that
\begin{align*}
[\tilde{v}(x),\tilde f(x)]|_{x = \pi (w)} 
&= \lambda (w) \frac{\partial \pi (w)}{\partial w} v(w)\\
&= (\tilde{\lambda}(x)\tilde{v}(x))|_{x = \pi (w)}.
\end{align*}
This completes the proof.
\end{proof}

\begin{remark}
With a similar proof, Theorem~\ref{thm:main} can be generalized to nonlinear systems that are not necessarily input-affine, i.e., systems of the form $\dot x = \hat{f}(x,u)$  where $\hat{f}:\bR^n\times\bR^m\rightarrow\bR^n$ is of class $C^1$ and $\hat{f}(0,0)=0$.

\end{remark}

\begin{remark}\label{rem:mlt}
Item 3) of Theorem~\ref{thm:main} is a mild condition because $\tilde \lambda (x)$ and $\tilde v(x)$ are arbitrary as long as they satisfy~\eqref{cond1:main}. In fact, there can be multiple $\tilde \lambda (x)$ and $\tilde v(x)$ satisfying~\eqref{cond1:main} if $\nu < n$. If $\lambda (w)$ is constant, i.e., $\lambda (w) = c$ on $\bR^\nu$, then a natural choice would be $\Tilde{\lambda} (x) = c$ on $\bR^n$.

\end{remark}

\begin{remark}
    To the best of our knowledge, there is no easy-to-check condition on the global existence of a solution $\pi(w)$ to a nonlinear Sylvester equation \eqref{eq:syl}. However, the local existence around the origin can be verified by utilizing, e.g., \cite[Auxiliary Proposition]{lyapunov1992general} and \cite[Lemma 1]{isidori1990output}.~
\end{remark}

Theorem~\ref{thm:main} can be applied to partial nonlinear eigenvalue assignment via the following procedure:
\begin{enumerate}
    \item design an exo-system \eqref{eq:exo} having desirable $\nu$ nonlinear right eigenvalues $\lambda_1(w),\dots,\lambda_\nu(w)$ and their associated nonlinear right eigenvectors $v_1(w),\dots,v_\nu(w)$;
    
    \item find $l(w)$ such that the nonlinear Sylvester equation \eqref{eq:syl} has a solution $\pi(w)$ on $\bR^\nu$, where $\partial \pi (w)/\partial w$ is of full rank at each $w \in \bR^\nu$;

    \item find $k(x)$ and $(\tilde \lambda_i (x), \tilde v_i(x))$, $i=1,\dots,\nu$ satisfying \eqref{eq:pep}, \eqref{cond1:main} on $\bR^\nu$, respectively.
\end{enumerate}
Then for all $i=1,\ldots,n$, $\tilde \lambda_i (x)$, $\tilde v_i(x)$ are a pair of nonlinear right eigenvalue and nonlinear right eigenvector of the closed-loop system~\eqref{eq:sys_cl} on $\pi (\bR^\nu)$.

\begin{remark}
    A sufficient condition for $\tilde{v}(\pi(w)) \neq 0_{\nu \times 1}$ is given by $\mathrm{rank}(\partial \pi(w)/\partial w)=\nu$ on $\bR^\nu$. In addition, if $\mathrm{rank}_{\bR^\nu}(\begin{bmatrix}
    v_1(w)&\ldots&v_\nu(w)
    \end{bmatrix})=\nu$ on $\bR^\nu$, then $\mathrm{rank}(\partial \pi(w)/\partial w)=\nu$ on $\bR^\nu$ also guarantees that the $\nu$ closed-loop nonlinear right eigenvectors are linearly independent on $\pi (\bR^\nu)$. This procedure is a natural extension of the linear case, where $\lambda_i = \tilde \lambda_i$, $i=1,\dots,\nu$.
\end{remark}


By partial nonlinear eigenvalue assignment, we only assign at most~$\nu$ nonlinear right eigenvalues of the closed-loop system. When $\nu < n$, the remaining nonlinear eigenvalues can remain unchanged under state feedback if the following condition holds. 

\begin{theorem}\label{thm:inv}
Let $\lambda:\bR^n \to \bC$ of class $C^0$ and $v:\bR^n \to \bC^n\setminus\{0_{n\times 1}\}$ of class $C^1$ be, respectively, a nonlinear right eigenvalue and its associated nonlinear right eigenvector of $\dot x = f(x)$ on a subset $D_x\subset \bR^n$. If $k:\bR^n \to \bR^m$ of class $C^1$ satisfies
    \begin{align}
    \label{eq:inv}
        [v(x), g(x) k(x)] = 0
    \end{align}
    on $D_x$, then $\lambda (x)$ and $v(x)$ are a pair of nonlinear right eigenvalue and nonlinear right eigenvector of the closed-loop system~\eqref{eq:sys_cl} on $D_x$.
\end{theorem}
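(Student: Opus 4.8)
The plan is to reduce everything to the additivity of the Lie bracket in its vector-field argument. Recall from \eqref{eq:sys_cl} that the closed-loop drift is $\tilde f(x) = f(x) + g(x)k(x)$, and that, by the definition of the Lie bracket, $[v(x), s(x)] = (\partial s(x)/\partial x)\, v(x) - (\partial v(x)/\partial x)\, s(x)$ depends additively on the second slot $s$. I would therefore first establish the decomposition
\begin{align*}
[v(x), \tilde f(x)]
&= \frac{\partial\bigl(f(x)+g(x)k(x)\bigr)}{\partial x}\, v(x) - \frac{\partial v(x)}{\partial x}\bigl(f(x)+g(x)k(x)\bigr) \\
&= [v(x), f(x)] + [v(x), g(x)k(x)],
\end{align*}
valid on $D_x$, noting that $g(\cdot)k(\cdot)$ is of class $C^1$ (as a product of $C^1$ maps), so the bracket is well defined.

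Next, since $\lambda$ and $v$ are, respectively, a nonlinear right eigenvalue and eigenvector of $\dot x = f(x)$ on $D_x$, the defining identity \eqref{eq:reig} gives $[v(x), f(x)] = \lambda(x)\, v(x)$ for all $x\in D_x$. Substituting this together with the hypothesis \eqref{eq:inv}, namely $[v(x), g(x)k(x)] = 0$ on $D_x$, into the decomposition yields
\begin{align*}
[v(x), \tilde f(x)] = \lambda(x)\, v(x) + 0 = \lambda(x)\, v(x), \qquad x \in D_x.
\end{align*}
Finally, I would check that all regularity requirements of Definition~\ref{def:reig} are inherited for the closed-loop system: $v\in C^1$ with $v(x)\neq 0_{n\times 1}$ on $D_x$ by hypothesis, $\lambda\in C^0$, and $\tilde f\in C^1$. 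Hence $(\lambda, v)$ is a nonlinear right eigenpair of \eqref{eq:sys_cl} on $D_x$, which is the claim.

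There is essentially no genuine obstacle here: the result is the nonlinear counterpart of the elementary linear fact that a feedback perturbation leaving the relevant eigenline invariant (in the linear case, $BK v = 0$, which is exactly $[v, BKx] = 0$ for constant $v$) does not disturb the corresponding eigenvalue and eigenvector. The only point worth double-checking is that the second summand of the Lie bracket, $-(\partial v/\partial x)\, s$, is indeed additive in $s$ — immediate from the formula — so that the split $[v, f+gk] = [v,f] + [v,gk]$ holds verbatim; the remainder is direct substitution of \eqref{eq:reig} and \eqref{eq:inv}.
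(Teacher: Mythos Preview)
Your proof is correct and follows exactly the same approach as the paper: exploit additivity of the Lie bracket in the second argument to split $[v,\tilde f]=[v,f]+[v,gk]$, then substitute \eqref{eq:reig} and \eqref{eq:inv}. The paper's version is just a two-line condensation of what you wrote.
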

\begin{proof}
It follows from \eqref{eq:reig}, i.e., $[v(x), f(x)] = \lambda (x) v(x)$ and~\eqref{eq:inv} that
\begin{align*}
&[v(x), f(x) + g(x) k(x)] \\
&= [v(x), f(x)] + [v(x), g(x) k(x)]
= \lambda (x) v(x).
\end{align*}
That completes the proof.
\end{proof}

\begin{remark}
In the linear case, \eqref{eq:inv} becomes $B K v = 0$. If $B$ is of full column rank, this is equivalent to $K v = 0$, which is exactly the condition in \cite[Lemma 1]{wang2024partial}.

\end{remark}

Below, we illustrate Theorems~\ref{thm:main} and~\ref{thm:inv} for partial nonlinear right eigenvalue assignment by an example.

\begin{example}
\label{ex:stab}
Consider the nonlinear system \eqref{eq:sys_x} with
\begin{align}\label{ex:osys}
    f(x) = \begin{bmatrix}
        -x_1-x_1^2/2 +x_1x_2+2x_2-x_2^2\\x_2-x_2^2/2
    \end{bmatrix},\ g(x) = \begin{bmatrix}
        1\\1
    \end{bmatrix}.
\end{align}
nonlinear right eigenvalues and their associated nonlinear right eigenvectors of $\dot x = f(x)$ (also referred to as open-loop eigenvalues and eigenvectors) on $\bR^2$ are
\begin{align*}
\begin{aligned}
\lambda_{o,1}(x) &= -1-x_1+x_2,
& 
v_{o,1}(x) &= \begin{bmatrix}
1\\0
\end{bmatrix}\\ 
\lambda_{o,2}(x) &= 1-x_2, 
& 
v_{o,2}(x) &= \begin{bmatrix}
    1\\1
\end{bmatrix}.
\end{aligned}
\end{align*}

We design $u=k(x)$ such that one of the closed-loop nonlinear right eigenvalues is assigned at $-1-x_2$. Note that this system is not feedback linearizable.

Step 1: Construct an exo-system
\begin{align}
\label{eq:exo_sys_ex}
\dot w = -w-w^2/2
\end{align}
such that $\lambda (w) = -1-w$ and $v(w) = 1$ on $\bR$.

Step 2: Consider the nonlinear Sylvester equation~\eqref{eq:syl}:

\begin{align*}
&\begin{bmatrix}
    -\pi_1(w)-\pi_1^2(w)/2+\pi_1(w)\pi_2(w)+2\pi_2(w)-\pi_2^2(w) \\
    \pi_2(w)-\pi_2^2(w)/2
\end{bmatrix}\\
&\quad+\begin{bmatrix}
    1\\1
\end{bmatrix}l(w)=  (-w-w^2/2)\,\begin{bmatrix}
    \partial \pi_1(w)/\partial w \\ \partial \pi_2(w)/\partial w
\end{bmatrix},
\end{align*}
where $\pi_i(w)$ denotes the $i$-th component of $\pi(w)$. For $l(w) = -2w$, the nonlinear Sylvester equation has a class $C^1$ solution
\begin{align*}
\pi (w) = 
\begin{bmatrix}
        w \\ w
    \end{bmatrix}.
\end{align*}

Step 3: Set 
$k(x) = \varphi(x_2-x_1)-2x_2$, with $\varphi:\mathbb{R}\rightarrow\mathbb{R}$ an arbitrary class $C^1$ function satisfying $\varphi(0)=0$. Clearly, $k(x)$ satisfies \eqref{eq:pep}, and the resulting closed-loop system is
\begin{align}\label{eq:ex2_closed}
    \dot{x} = \begin{bmatrix}
        -x_1-x_1^2/2+x_1x_2-x_2^2+\varphi(x_2-x_1)\\-x_2-x_2^2/2+\varphi(x_2-x_1)
    \end{bmatrix}.
\end{align}
It can be verified that 
\begin{equation*}
\label{eq:ex2_cl_eig}
\begin{aligned}
    \lambda_{c}(x) &= -1-x_2, & 
    v_{c}(x) &= \begin{bmatrix}
    1\\1
\end{bmatrix}
\end{aligned}
\end{equation*}
is a pair of closed-loop nonlinear right eigenvalue and eigenvector on $\bR^2$.
In this case, we achieve partial nonlinear right eigenvalue assignment not only on $\pi(\bR)$ but also on $\bR^2$.

Next, we design a $k(x)$ satisfying \eqref{eq:inv}. As shown in Theorem~\ref{thm:inv}, if $k(x)$ additionally fulfills \eqref{eq:inv}, then the open-loop nonlinear right eigenvalue and its associated nonlinear right eigenvector remain unchanged. This is demonstrated by preserving $\lambda_{o,1}(x) = -1-x_1+x_2$ and $v_{o,1}(x)$.
For $v_{o,1}(x)$, \eqref{eq:inv} reduces to
\begin{align*}
    \left[\begin{bmatrix}
    1\\0
\end{bmatrix},
    \begin{bmatrix}
        1\\1
    \end{bmatrix} k(x)
\right]=0,
\end{align*}
which is equivalent to $$\frac{\partial k(x)}{\partial x_1} = 0.$$
Then, the required controller is 
$
k(x) = -2x_2.
$
The resulting closed-loop system is
\begin{align}
\label{eq:ex3_closed}
    \dot{x} = \begin{bmatrix}
        -x_1-x_1^2/2+x_1x_2-x_2^2\\-x_2-x_2^2/2
    \end{bmatrix}.
\end{align}
The closed-loop nonlinear right eigenvalues and their associated eigenvectors on $\bR^2$ are
    \begin{align*}\label{eq:ex3_cl_eig}
    \lambda_{c,1}(x) &= -1+x_2-x_1 = \lambda_{o,1}(x),& 
    v_{c,1}(x) &= \begin{bmatrix}
    1\\0
\end{bmatrix} =v_{o,1} \nonumber\\ 
\lambda_{c,2}(x) &= -1-x_2,& 
v_{c,2}(x) &= \begin{bmatrix}
    1\\1
\end{bmatrix}.
\end{align*}
Therefore, the open-loop nonlinear right eigenvalue $\lambda_{o,1}(x)$ and its associated right eigenvector $v_{o,1}(x)$ are preserved while achieving partial nonlinear eigenvalue assignment.

\end{example}


\subsection{Remarks on Partial Stabilization}
\label{sec:rem}

For linear systems, eigenvalues characterize stability properties. This is also true in the nonlinear case under the integrability assumption of nonlinear eigenvectors \cite{kawano2015stability}. Therefore, the proposed approach can also be applied to partial stabilization of nonlinear systems, which is illustrated in this subsection. Throughout this subsection, we focus on local convergence to an equilibrium point.

First, we recall a part of the result in \cite[Theorem 1]{astolfi2003immersion}, which establishes the connection between stability of an exo-system~\eqref{eq:exo} and the closed-loop system~\eqref{eq:sys_cl} via the nonlinear Sylvester equation~\eqref{eq:syl}.


\begin{proposition}
\label{prop:ps}
Let $D\subseteq \bR^\nu$ be an open subset containing the origin.
Suppose that there exist class $C^1$ functions $k:\bR^n \to \bR^m$, $\pi:D \to \bR^n$, $l:D \to \bR^m$, and $s:D \to \bR^\nu$ with $k(0) = 0$, $l(0)=0$, $\pi(0)=0$ and $s(0)=0$, satisfying the following two conditions:
\begin{enumerate}
\item each trajectory of the exo-system~\eqref{eq:exo} starting from $D$ is a bounded function of $t \ge 0$ and converges to the origin\footnote{This is weaker than local asymptotic stability of the origin.};

\item the nonlinear Sylvester equation \eqref{eq:syl} and the equality \eqref{eq:pep} hold for all $w\in D$.
\end{enumerate}
Then, for each $x(0) \in \pi (D)$, the corresponding trajectory of the closed-loop system~\eqref{eq:sys_cl} is a bounded function of $t \ge 0$ and converges to the origin.
\end{proposition}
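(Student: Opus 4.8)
\textbf{Proof plan for Proposition~\ref{prop:ps}.}
The plan is to show that the manifold $\mathcal{M} := \pi(D) = \{x : x = \pi(w),\ w \in D\}$ is invariant under the closed-loop flow~\eqref{eq:sys_cl}, and that on this manifold the closed-loop dynamics are conjugate (via $\pi$) to the exo-system dynamics~\eqref{eq:exo}. The boundedness and convergence claims for trajectories starting in $\pi(D)$ then follow by transporting the corresponding properties of the exo-system through $\pi$, using that $\pi$ is continuous with $\pi(0)=0$.

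First I would establish invariance. Let $x(0) = \pi(w_0) \in \pi(D)$ and let $w(t)$ be the exo-system trajectory with $w(0) = w_0$; by condition~1) this trajectory stays in $D$ (indeed it is bounded and converges to the origin, and I would note that $D$ open containing $0$ together with boundedness means the trajectory remains in a compact subset of $D$ — or more carefully, invoke the same reasoning as in~\cite{astolfi2003immersion}). Define the candidate $x(t) := \pi(w(t))$. Differentiating and using the chain rule gives $\dot x(t) = \frac{\partial \pi(w(t))}{\partial w}\, \dot w(t) = \frac{\partial \pi(w(t))}{\partial w}\, s(w(t))$, which by the nonlinear Sylvester equation~\eqref{eq:syl} equals $f(\pi(w(t))) + g(\pi(w(t)))\, l(w(t))$. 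Now apply the matching condition~\eqref{eq:pep}, $l(w) = k(\pi(w))$, so the right-hand side becomes $f(x(t)) + g(x(t))\, k(x(t)) = \tilde f(x(t))$. Hence $x(t) = \pi(w(t))$ solves the closed-loop ODE $\dot x = \tilde f(x)$ with $x(0) = \pi(w_0)$; by uniqueness of solutions of ODEs with $C^1$ right-hand side, this is \emph{the} closed-loop trajectory through $\pi(w_0)$. This simultaneously proves invariance of $\pi(D)$ and gives the explicit representation $x(t) = \pi(w(t))$.

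Second, I would read off the conclusions. Since $w(t)$ is bounded on $t \ge 0$, it lies in some compact set $W \subset D$; then $x(t) = \pi(w(t))$ lies in the compact set $\pi(W)$, hence $x(t)$ is bounded. Since $w(t) \to 0$ as $t \to \infty$ and $\pi$ is continuous with $\pi(0) = 0$, we get $x(t) = \pi(w(t)) \to \pi(0) = 0$. This is exactly the claimed conclusion for each $x(0) \in \pi(D)$.

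I do not expect a serious obstacle here, since this is essentially a restatement of~\cite[Theorem~1]{astolfi2003immersion}; the only point requiring mild care is confirming that the exo-system trajectory starting in $D$ actually remains in $D$ for all $t \ge 0$, so that $\pi(w(t))$ and the Sylvester identity are defined along the whole trajectory. This is where condition~1) (boundedness \emph{and} convergence to the origin, with $D$ open containing $0$) is used: boundedness alone does not imply the trajectory stays in $D$, but combined with convergence to an interior point one argues the forward orbit has compact closure inside $D$ — or, as is standard, one simply invokes the hypothesis in the form used by~\cite{astolfi2003immersion}. The uniqueness-of-solutions step requires only the standing $C^1$ assumptions on $f$, $g$, $k$, which are in force.
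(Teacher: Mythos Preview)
Your proposal is correct and matches the paper's treatment: the paper does not give an independent proof but simply cites \cite[Theorem~1]{astolfi2003immersion}, and your argument (invariance of $\pi(D)$ via the Sylvester identity and~\eqref{eq:pep}, then transporting boundedness and convergence through the continuous map $\pi$ with $\pi(0)=0$) is exactly the standard proof of that result. Your caution about whether the exo-trajectory remains in $D$ is well placed but resolves trivially here, since $s$ is only defined on $D$ and condition~1) already asserts the trajectory exists and is bounded for all $t\ge 0$.
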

\begin{proof}
The proof can be found in \cite[Theorem 1]{astolfi2003immersion}.
\end{proof}

Hereafter, we call the guaranteed property of the origin of the closed-loop system in Proposition~\ref{prop:ps} \emph{partial stability} for the sake of simplicity. According to this proposition, the origin of the closed-loop system designed by partial nonlinear right eigenvalue assignment is partially stable if the exo-system satisfies item~1) of Proposition~\ref{prop:ps}. Next, we recall a sufficient condition in \cite{kawano2015stability} to verify this by using nonlinear right eigenvalues.

\begin{proposition}
\label{prop:gas}
Let $D\subseteq \bR^\nu$ be an open subset containing the origin.
An exo-system~\eqref{eq:exo} satisfies item 1) of Proposition~\ref{prop:ps} if the following three conditions hold:
\begin{enumerate}
\item it admits $\nu$ class $C^0$ nonlinear right eigenvalues $\lambda_i :  D \to  \bC$ and their associated class $C^2$ nonlinear right eigenvectors $v_i : D \to  \bC^\nu \setminus \{0_{\nu \times 1}\}$, $i=1,\dots,\nu$ such that
\begin{align*}
    &\mathrm{dim}_ \bC(\mathrm{span}_ \bC\{v_1(w),\ldots,v_\nu(w)\}) = \nu \\
    &[v_i(w),v_j(w)]=0, \quad \forall i,j=1,\ldots,\nu
\end{align*}
for all $w \in D$;


\item the origin is an isolated equilibrium point in $D$;


\item $\mathrm{Re}(\lambda_i(w)) \le 0$ holds on $ D$, and $\mathrm{Re}(\lambda_i(w))$ is not identically zero on $D$ for all $i=1 \dots, \nu$.
\end{enumerate}
\end{proposition}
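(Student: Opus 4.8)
The plan is to reduce the exo-system, via a nonlinear change of coordinates built from the eigenvector frame, to a collection of decoupled scalar (and, for complex eigenvalues, planar) equations whose convergence to the origin is governed directly by the sign of $\mathrm{Re}(\lambda_i)$.

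\emph{Step 1 (straightening the eigenvector frame).} I would first note that, since $s$ is real, complex nonlinear eigenpairs of \eqref{eq:exo} occur in conjugate pairs, so from $v_1,\dots,v_\nu$ together with their real and imaginary parts one obtains a frame of real, pairwise commuting $C^2$ vector fields on a neighbourhood of the origin spanning $\bR^\nu$ at every point (commutativity follows from $[v_i,v_j]=0$). The classical simultaneous flow-box theorem for a commuting frame then yields a $C^2$ diffeomorphism $w=\phi(z)$ with $\phi(0)=0$ that turns each frame element into a coordinate vector field; equivalently, each $v_i$ becomes a fixed complex combination of the $\partial/\partial z_j$.

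\emph{Step 2 (decoupling).} I would transform \eqref{eq:exo} to $\dot z=\tilde s(z):=(\partial\phi/\partial z)^{-1}s(\phi(z))$ and set $\tilde\lambda_i=\lambda_i\circ\phi$ (so $\tilde s(0)=0$ since $s(0)=0$). Coordinate-invariance of the Lie bracket turns \eqref{eq:reig} into $\partial\tilde s/\partial z_i=\tilde\lambda_i e_i$ for each coordinate associated with a real eigenvalue, and into the corresponding $2\times 2$ real-Jordan relation for a conjugate pair. Read componentwise, this forces each component $\tilde s_j$ to depend only on $z_j$ (resp. on the two coordinates of its block), with $\tilde s_j'=\tilde\lambda_j$. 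Hence the exo-system splits into $\nu$ independent scalar equations $\dot z_j=\tilde s_j(z_j)$ with $\tilde s_j(0)=0$ for the real eigenvalues, and into planar blocks whose Jacobian is the real-Jordan form with eigenvalues $\tilde\lambda_j,\overline{\tilde\lambda_j}$.

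\emph{Step 3 (elementary convergence of each block).} For a real block, $\tilde s_j$ is real and non-increasing (as $\tilde s_j'=\tilde\lambda_j\le 0$) with $\tilde s_j(0)=0$, so $z_j\tilde s_j(z_j)\le 0$; hence $|z_j(t)|$ is non-increasing, in particular bounded, and converges monotonically to a zero $z_j^\ast$ of $\tilde s_j$. If $z_j^\ast\neq 0$, monotonicity forces $\tilde s_j\equiv 0$ between $0$ and $z_j^\ast$, which together with $\tilde s_k(0)=0$ for the other blocks produces a continuum of equilibria of $\tilde s$ accumulating at the origin, contradicting item~2); hence $z_j^\ast=0$. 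For a planar block, $V_j=|z_j|^2$ satisfies $\dot V_j=2\big(\int_0^1\mathrm{Re}\,\tilde\lambda_j(\tau z_j)\,d\tau\big)V_j\le 0$ (the skew part of the real-Jordan form drops out of the quadratic form), so the trajectory is again bounded; a LaSalle-type argument using that $\mathrm{Re}(\lambda_j)$ is not identically zero (item~3)) together with isolation of the origin (item~2)) rules out any nonzero $\omega$-limit point. Pulling back through $\phi$, every trajectory of \eqref{eq:exo} starting near the origin is bounded and converges to the origin, which is item~1) of Proposition~\ref{prop:ps}.

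The main obstacle will be Step~1 in the stated generality — realizing the eigenvector frame as commuting \emph{real} vector fields and applying the straightening with only $C^2$ regularity, ideally over all of $D$ rather than merely near the origin — together with the last part of Step~3, where pinning the $\omega$-limit set exactly at the origin (and not at a larger invariant set, e.g. a periodic orbit of a planar block) is precisely where hypotheses~2) and~3) are indispensable; at this level the argument is essentially the diagonalizable-case stability theorem of \cite{kawano2015stability}, adapted to the present non-analytic, locally defined eigenstructure.
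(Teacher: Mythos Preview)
Your proposal is correct and follows essentially the same route as the paper: the paper's proof consists of a single sentence deferring to \cite[Remark~2]{kawano2015stability} (noting only that the domain is $D$ rather than $\bC^\nu$), and the paragraph immediately following the proof spells out precisely your Steps~1--3 --- the commuting eigenvector frame yields a diagonalizing change of coordinates $z=\psi(w)$, each resulting scalar subsystem has $\partial \hat s_i/\partial z_i = \lambda_i\circ\psi^{-1}$, and items~2) and~3) then give convergence of each block. You have in fact reconstructed (with some additional care about real vs.\ complex blocks and the LaSalle-type argument) what the cited reference contains, and you correctly identify this yourself in your final sentence.
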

\begin{proof}
Although the domain of the definition of each $v_i$, $i=1,\dots,\nu$ is on $D$ (not $\bC^\nu$), the proof is identical to the argument in
\cite[Remark 2]{kawano2015stability}.
\end{proof}

Item 1) of Proposition~\ref{prop:gas} implies that there exists a change of coordinates $z = \psi (w)$ such that the exo-system becomes diagonal in the $z$-coordinates, i.e., $\dot z_i = \hat s_i (z_i)$, $i=1,\dots,\nu$, where $z_i \in \psi_i(D) \subset  \bC$.
Also, it follows that $\partial \hat s_i (z_i)/\partial z_i = \lambda_i (\psi^{-1}(z))$, meaning that $\lambda_i (\psi^{-1}(z))$ is a nonlinear right eigenvalue in the $z$-coordinates. Items 2) and 3) of Proposition~\ref{prop:gas} guarantee the local asymptotic stability of the origin for each diagonal subsystem. Furthermore, if Proposition~\ref{prop:gas} holds for $D=\bR^\nu$, then every trajectory of each diagonal system converges to the origin.

Now, combining Theorem~\ref{thm:main} with Propositions~\ref{prop:ps} and~\ref{prop:gas}, the following result for partial stabilization follows directly.

\begin{corollary}\label{cor:part}
Let $D\subseteq \bR^\nu$ be an open subset containing the origin.
The origin of the closed-loop system~\eqref{eq:sys_cl} is partially stable
if there exist class $C^1$ functions $k:\bR^n \to \bR^m$, $\pi:D \to \bR^n$, $l:D \to \bR^m$, and $s:D \to \bR^\nu$ with $k(0) = 0$, $l(0)=0$, $\pi(0)=0$ and $s(0)=0$ such that the exo-system~\eqref{eq:exo} satisfies all conditions in Proposition~\ref{prop:gas}.

\end{corollary}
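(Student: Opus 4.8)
The plan is to obtain the result by simply chaining Propositions~\ref{prop:ps} and~\ref{prop:gas}, since Proposition~\ref{prop:gas} is precisely tailored to supply the hypothesis (item~1) that Proposition~\ref{prop:ps} needs in order to transfer boundedness and convergence from the exo-system to the closed-loop system. Note that to invoke Proposition~\ref{prop:ps} one also needs the functions $k,\pi,l,s$ to satisfy the nonlinear Sylvester equation~\eqref{eq:syl} together with the matching condition~\eqref{eq:pep} on $D$; this is the implicit setup under which partial nonlinear right eigenvalue assignment via Theorem~\ref{thm:main} is carried out, and I would make it explicit at the start of the proof.

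First I would apply Proposition~\ref{prop:gas}. By assumption the exo-system~\eqref{eq:exo} admits $\nu$ class~$C^0$ nonlinear right eigenvalues $\lambda_i$ with class~$C^2$ eigenvectors $v_i$ that are pointwise linearly independent and pairwise commuting on $D$, the origin is an isolated equilibrium in $D$, and $\mathrm{Re}(\lambda_i(w))\le 0$ on $D$ with $\mathrm{Re}(\lambda_i)$ not identically zero for each $i$. By the commuting-eigenvector argument recalled after Proposition~\ref{prop:gas} (cf.\ \cite[Remark~2]{kawano2015stability}), there is a change of coordinates $z=\psi(w)$ diagonalizing the exo-system into scalar subsystems $\dot z_i=\hat s_i(z_i)$ with $\partial \hat s_i/\partial z_i=\lambda_i\circ\psi^{-1}$; items~2) and~3) then make the origin locally attractive for each scalar subsystem, so every trajectory of~\eqref{eq:exo} starting in $D$ is bounded on $t\ge 0$ and converges to the origin. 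This is exactly item~1) of Proposition~\ref{prop:ps}.

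Then I would invoke Proposition~\ref{prop:ps}. With item~1) of that proposition verified, and with~\eqref{eq:syl} and~\eqref{eq:pep} holding on $D$ for the given $k,\pi,l,s$, Proposition~\ref{prop:ps} applies directly and yields that for every $x(0)\in\pi(D)$ the closed-loop trajectory of~\eqref{eq:sys_cl} is bounded on $t\ge 0$ and converges to the origin; that is, the origin is partially stable in the sense defined above, which completes the proof.

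The argument contains no genuine analytic difficulty — the corollary is essentially a repackaging of results already established. The points requiring care are bookkeeping ones: ensuring the open set $D$ and the normalization conditions at the origin are used consistently across the two propositions, checking that the regularity demanded by Proposition~\ref{prop:gas} (class~$C^2$ eigenvectors) is what is assumed, and making explicit the suppressed hypothesis that~\eqref{eq:syl} and~\eqref{eq:pep} hold on $D$, without which Proposition~\ref{prop:ps} cannot be invoked. One could additionally remark that Theorem~\ref{thm:main} identifies the assigned closed-loop nonlinear right eigenvalues with the images under $\pi$ of the exo-system eigenvalues $\lambda_i$, which is what makes partial nonlinear eigenvalue assignment and partial stabilization coincide here, but this observation is not needed for the stability conclusion itself.
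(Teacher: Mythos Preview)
Your proposal is correct and matches the paper's own argument: the corollary is stated as following ``directly'' from combining Theorem~\ref{thm:main} with Propositions~\ref{prop:ps} and~\ref{prop:gas}, with no further proof given. Your observation that the Sylvester equation~\eqref{eq:syl} and the matching condition~\eqref{eq:pep} are implicitly assumed on $D$ is exactly right (the remark following the corollary confirms this), and your point that Theorem~\ref{thm:main} plays an interpretive rather than logical role in the stability conclusion is accurate.
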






\begin{remark}
    Note that Theorem~\ref{thm:main} requires \eqref{eq:syl} and \eqref{eq:pep} to hold on $\bR^\nu$. Instead, Corollary~\ref{cor:part} only requires this in an open set $D$ around the origin. This can be understood as a condition for partial nonlinear right eigenvalue assignment on $\pi(D)$.
    
\end{remark}

\begin{example}(Continuation of Example~\ref{ex:stab}.)
In this example, we illustrate Corollary~\ref{cor:part} by applying it to partial stabilization of system~\eqref{ex:osys}. For the exo-system~\eqref{eq:exo_sys_ex}, the closed-loop systems~\eqref{eq:ex3_closed} satisfies all the conditions of Corollary~\ref{cor:part} on $D:=\{x\in\bR^2: -1 \le x_2 \le 1 + x_1\}$. Consequently, its origin is partially stable with region of attraction $D$. This suggests that a nonlinear system can be stabilized by shifting only the unstable nonlinear eigenvalues. It is again worth emphasizing that the system~\eqref{ex:osys} is not feedback linearizable.
\end{example}



\section{Partial Left Eigenvalue Assignment}
\label{sec:dual}

In the previous section, we have studied partial nonlinear right eigenvalue assignment. In this section, we consider its dual problem: partial nonlinear left eigenvalue assignment. The results in this section can be applied to partial observer design or partial stabilization by output feedback. 

Consider an autonomous nonlinear system given by
\[
  \dot z  = F(z), \quad
    y = H(z),
\]
where $F:\bR^N \to \bR^N$ and $H:\bR^N \to \bR^M$ are of class $C^1$ such that $F(0) = 0$ and $H(0)=0$. 
As a dual form of the problem of~\eqref{eq:sys_cl}, we study a partial left nonlinear eigenvalue assignment problem of the following system:
\begin{align}\label{eq:sys_ob}
\dot z = \tilde{F}(z):= F(z) + q(z, H(z)),
\end{align}
where $q:\bR^N \times \bR^M \to \bR^N$ is of class $C^1$ such that $q(0,0)=0$.

To address this, we utilize the dual Sylvester equation~\cite{scarciotti2024interconnection}.
Let $r: \bR^\nu \times \bR^M \to \bR^\nu$ and $s:\bR^\nu \to \bR^\nu$ be of class $C^1$ such that $r(0,0)=0$ and $s(0)=0$. The dual Sylvester equation of~\eqref{eq:syl} is the following nonlinear PDE with respect to $\rho (z)$:
\begin{align}
\label{eq:dsyl}
    -s(-\rho (z)) = \frac{\partial \rho (z)}{\partial z} F(z) + r (-\rho (z), H(z)).
\end{align}

Now, we are ready to provide a dual of Theorem~\ref{thm:main} as follows.
\begin{theorem}
\label{thm:dmain}
    Suppose that there exist class $C^1$ functions $q:\bR^N \times \bR^M \to \bR^N$, $r:\bR^\nu \times \bR^M \to \bR^\nu$, and $s:\bR^\nu \to \bR^\nu$ with $q(0,0) = 0$, $r(0,0)=0$, and $s(0)=0$ satisfying the following three conditions:
    \begin{enumerate}
    \item the exo-system~\eqref{eq:exo} admits a nonlinear left eigenvalue $\lambda (w)$ and its associated nonlinear left eigenvector $v(w)$ on $\bR^\nu$;
    
    \item the nonlinear Sylvester equation~\eqref{eq:dsyl} has a class $C^1$ solution $\rho (z)$ on $\bR^N$ such that $\rho (0) = 0$, and 
    \begin{align}
    \label{eq:pep_ob}
        r ( -\rho (z), H(z))=\frac{\partial \rho (z)}{\partial z} q(z, H(z))
    \end{align}
    holds on $\bR^N$;

    \item there exist  $\Tilde{\lambda}:\bR^N \to  \bC$ of class $C^0$ and $\Tilde{v}:\bR^N \to  \bC^n$ of class $C^1$ such that 
   \begin{subequations}\label{cond1:dimm}
    \begin{align}
    \Tilde{\lambda}(z) &= \lambda(- \rho (z) ) \\
    \tilde{v}^\top (z) &= - v^\top (-\rho (z)) \frac{\partial \rho (z)}{\partial z} 
    \label{cond12:dimm}
    \end{align}
    \end{subequations}
    holds on $\bR^N$, and $\tilde{v}^\top(z)\neq0_{1\times N}$ for all $z\in\bR^{N}$
    \end{enumerate}
   Then, $\tilde{\lambda}(z)$ and $\tilde{v}(z)$ are, respectively, a nonlinear left eigenvalue and its associated nonlinear left eigenvector of the closed-loop system \eqref{eq:sys_ob} on $\bR^N$.
\end{theorem}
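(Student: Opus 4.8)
The plan is to mirror the proof of Theorem~\ref{thm:main}, exploiting the duality between the right-eigenvalue identity \eqref{eq:reig} and the left-eigenvalue identity \eqref{eq:leig}, with the sign conventions built into the dual Sylvester equation \eqref{eq:dsyl}. I would start from the quantity to be verified, namely the left-eigenvalue relation \eqref{eq:leig} for the closed-loop field $\tilde F$ with candidate eigenpair $(\tilde\lambda,\tilde v)$, i.e.
\begin{align*}
\tilde v^\top(z)\frac{\partial \tilde F(z)}{\partial z} + \left(\frac{\partial \tilde v(z)}{\partial z}\tilde F(z)\right)^\top = \tilde\lambda(z)\,\tilde v^\top(z),
\end{align*}
and substitute $\tilde F = F + q(\cdot,H(\cdot))$ together with \eqref{cond12:dimm}, so that every occurrence of $\tilde v$ is replaced by $-v^\top(-\rho(z))\,\partial\rho(z)/\partial z$. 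The first step is therefore purely a change of variables: set $w = -\rho(z)$ and push all derivatives through the chain rule, using that $\partial(-\rho(z))/\partial z = -\partial\rho(z)/\partial z$.

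Next I would use the dual Sylvester equation \eqref{eq:dsyl} and the compatibility condition \eqref{eq:pep_ob} to eliminate $F$ and $q$ in favor of $s$ and its derivatives. Concretely, \eqref{eq:dsyl} together with \eqref{eq:pep_ob} gives $-s(-\rho(z)) = \partial\rho(z)/\partial z\,\tilde F(z)$ (adding the two equations), which is the exact dual analogue of the identity $\partial\pi(w)/\partial w\,s(w) = \tilde f(\pi(w))$ used in Theorem~\ref{thm:main}. Differentiating this relation in $z$ produces the term $\tilde v^\top\,\partial\tilde F/\partial z$ expressed through $\partial s/\partial w$ and the Hessian-type term $\sum_i \partial_{w_i}(\partial\rho/\partial z)\,s_i$; likewise, differentiating $\tilde v^\top$ and contracting with $\tilde F$ produces $\partial v/\partial w$ against $s$ plus a symmetric second-order term. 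Assembling the two contributions, the second-order (Hessian) terms cancel by equality of mixed partials — exactly as in Theorem~\ref{thm:main}, where $\sum_{i,j}\partial^2\pi/\partial w_i\partial w_j\,s_i v_j$ appears symmetrically — and what remains is precisely the left-eigenvalue combination $v^\top(w)\,\partial s(w)/\partial w + (\partial v(w)/\partial w\,s(w))^\top$. Applying the exo-system left-eigenvalue identity \eqref{eq:leig}, this equals $\lambda(w)\,v^\top(w)$, which upon reinserting $w=-\rho(z)$ and the factor $-\partial\rho/\partial z$ is exactly $\tilde\lambda(z)\,\tilde v^\top(z)$ by \eqref{cond1:dimm}. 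The nonvanishing hypothesis $\tilde v^\top(z)\neq 0$ is what makes $\tilde v$ a legitimate eigenvector.

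The main obstacle I anticipate is bookkeeping rather than conceptual: keeping the sign conventions consistent. The dual Sylvester equation \eqref{eq:dsyl} evaluates $s$ and $r$ at the shifted argument $-\rho(z)$, and \eqref{cond12:dimm} carries an explicit minus sign, so every chain-rule step introduces a sign that must be tracked carefully; a single misplaced sign collapses the argument. The cleanest way to manage this is to introduce the auxiliary map $\mu(z) := -\rho(z)$ once and for all, rewrite \eqref{eq:dsyl}–\eqref{cond1:dimm} in terms of $\mu$, observe that $(s, r)$ then play the roles that $(s, l)$ played for $\mu$ just as $(s,l)$ did for $\pi$ in Theorem~\ref{thm:main}, and then quote the computation of Theorem~\ref{thm:main} essentially verbatim with "$[\cdot,\cdot]=\lambda v$" replaced by the transposed identity \eqref{eq:leig}. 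A secondary point to state carefully is that the conclusion holds on all of $\bR^N$ (not merely on a projected set), which is automatic here because $\rho$ is defined on all of $\bR^N$ and no image restriction is needed — unlike the right-eigenvalue case where one restricts to $\pi(\bR^\nu)$. I would end by noting that the dimensions match: $\tilde v^\top(z)$ is $1\times N$, consistent with \eqref{eq:leig} for the $N$-dimensional system \eqref{eq:sys_ob}.
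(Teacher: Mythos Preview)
Your proposal is correct and follows essentially the same route as the paper: combine \eqref{eq:dsyl} and \eqref{eq:pep_ob} to obtain $-s(-\rho(z)) = (\partial\rho/\partial z)\,\tilde F(z)$, differentiate this in $z$ to express $\tilde v^\top\,\partial\tilde F/\partial z$, separately compute $(\partial\tilde v/\partial z)\,\tilde F$ via \eqref{cond12:dimm} and the chain rule, observe that the mixed-second-derivative terms cancel, and finish with \eqref{eq:leig} and \eqref{cond1:dimm}. One small slip to fix when you write it out: the Hessian-type sums should be indexed by $z_i$ (e.g.\ $\sum_i \partial_{z_i}(\partial\rho/\partial z)\,\tilde F_i$), not $w_i$, since $\rho$ is a function of $z$.
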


\begin{proof}
It follows from~\eqref{eq:dsyl}, \eqref{eq:pep_ob}, and \eqref{eq:sys_ob} that 
\begin{align}\label{pf1:dmain}
    -s(-\rho (z)) 
    &= \frac{\partial \rho (z)}{\partial z} F(z) + r (-\rho (z), H(z)) \nonumber\\
    &= \frac{\partial \rho (z)}{\partial z} (F(z) + q(z, H(z))) \nonumber\\
    &= \frac{\partial \rho (z)}{\partial z} \tilde F(z).
\end{align}
Taking the partial derivative of both side with respect to $z$ yields
\begin{align}\label{pf2:dmain}
&\frac{\partial \rho (z)}{\partial z} \frac{\partial \tilde F(z)}{\partial z} \nonumber\\
&= - \frac{\partial s(-\rho (z))}{\partial z}  - \sum_{i=1}^n \frac{\partial}{\partial z_i} \left( \frac{\partial \rho (z)}{\partial z} \right) \tilde F_i (z).
\end{align}

Based on them, we show the statement.
First, we have, from~\eqref{cond12:dimm} and~\eqref{pf2:dmain}, 
\begin{align}\label{pf3:dmain}
&\tilde v^\top (z) \frac{\partial \tilde F (z)}{\partial z} \nonumber\\
&= - v^\top (-\rho (z)) \frac{\partial \rho (z)}{\partial z} \frac{\partial \tilde F (z)}{\partial z} \nonumber\\
&= v^\top (-\rho (z))\frac{\partial s(-\rho (z))}{\partial z} \nonumber\\
&\qquad + v^\top (-\rho (z)) \sum_{i=1}^n \frac{\partial}{\partial z_i} \left( \frac{\partial \rho (z)}{\partial z} \right) \tilde F_i (z).
\end{align}

Next, utilizing~\eqref{cond12:dimm}, compute
\begin{align}\label{pf4:dmain}
\frac{\partial \tilde v(z)}{\partial z} \tilde F(z)
&= - \left(\sum_{i=1}^n \frac{\partial}{\partial z_i}\left(\frac{\partial^\top \rho (z)}{\partial z}\right) v_i (-\rho (z))\right)  \tilde F(z) \nonumber\\
&\qquad - \frac{\partial^\top \rho (z)}{\partial z} \frac{\partial v (-\rho(z))}{\partial z} \tilde F(z).
\end{align}
We focus on the second term of the right-hand side. Applying~\eqref{pf1:dmain} and \eqref{eq:leig} in order gives
\begin{align*}
\frac{\partial v (-\rho(z))}{\partial z} \tilde F(z)
&= - \left.\frac{\partial v (w)}{\partial w}\right|_{w = -\rho (z)} \frac{\partial \rho(z)}{\partial z} \tilde F(z)\\
&= \left. \left(\frac{\partial v (w)}{\partial z} s(w)\right) \right|_{w=-\rho(z)} \\
&= \left. \left(\lambda (w) v (w) - \frac{\partial^\top s(w)}{\partial w} v(w))\right) \right|_{w=-\rho(z)}.
\end{align*}
Substituting this into~\eqref{pf4:dmain} and utilizing~\eqref{cond1:dimm} lead to
\begin{align*}
&\frac{\partial \tilde v(z)}{\partial z} \tilde F(z) \\
&= - \left(\sum_{i=1}^n \frac{\partial}{\partial z_i}\left(\frac{\partial^\top \rho (z)}{\partial z}\right) v_i (-\rho (z))\right)  \tilde F(z) \\
&\qquad - \frac{\partial^\top \rho (z)}{\partial z} \left. \left(\lambda (w) v (w) - \frac{\partial^\top s(w)}{\partial w} v(w))\right) \right|_{w=-\rho(z)} \\
&= - \left(\sum_{i=1}^n \frac{\partial}{\partial z_i}\left(\frac{\partial^\top \rho (z)}{\partial z}\right) v_i (-\rho (z))\right)  \tilde F(z) \\
&\qquad + \tilde \lambda (z) \tilde v (z) - \frac{\partial^\top s(-\rho(z))}{\partial z} v(-\rho(z)).
\end{align*}
From this and \eqref{pf3:dmain}, we have
\begin{align*}
&\tilde v^\top (z) \frac{\partial \tilde F(z)}{\partial z} + \left(\frac{\partial \tilde v(z)}{\partial z} \tilde F(z)\right)^\top \\
&= \tilde \lambda (z) \tilde v^\top (z) + v^\top (-\rho (z)) \sum_{i=1}^n \frac{\partial}{\partial z_i} \left( \frac{\partial \rho (z)}{\partial z} \right) \tilde F_i (z)\\
&\qquad 
- \tilde F^\top (z) \left(\sum_{i=1}^n \frac{\partial}{\partial z_i}\left(\frac{\partial^\top \rho (z)}{\partial z}\right) v_i (-\rho (z))\right)^\top \\
&= \tilde \lambda (z) \tilde v^\top (z).
\end{align*}
This completes the proof.
\end{proof}
\begin{remark}
    A sufficient condition for $\tilde{v}^\top(z)\neq0_{1\times N}$ is given by $\mathrm{rank}(\partial \rho(z)/\partial z)=\nu, \ \forall z\in\bR^{N}$.
\end{remark}

We next provide a condition for preserving an open-loop nonlinear left eigenvalue and eigenvector.
\begin{theorem}
Let $\lambda:\bR^N \to  \bC$ of class $C^0$ and $v:\bR^N \to \bC^N\setminus\{0_{N\times1}\}$ of class $C^1$ be, respectively, a nonlinear left eigenvalue and its associated nonlinear left eigenvector of $\dot z = F(z)$ on $\bR^N$. If $q:\bR^N \times \bR^M \to \bR^N$ of class $C^1$ satisfies
    \begin{align}
    \label{eq:dinv}
        v^\top (z)\frac{\partial q(z,H(z))}{\partial z} + \left(\frac{\partial v(z)}{\partial z}q(z,H(z))\right)^\top = 0
    \end{align}
    on $\bR^N$, then $\lambda (z)$ and $v(z)$ are, respectively, a nonlinear left eigenvalue and its associated nonlinear left eigenvector of the system~\eqref{eq:sys_ob} on $\bR^N$.
\end{theorem}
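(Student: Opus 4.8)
The plan is to exploit the fact that the defining relation \eqref{eq:leig} for a nonlinear left eigenvector is $\bR$-linear in the underlying vector field, so that the feedback term $q(z,H(z))$ in $\tilde F = F + q(\cdot,H(\cdot))$ contributes additively and the standing hypothesis \eqref{eq:dinv} exactly cancels that contribution. This is the precise dual of the argument behind Theorem~\ref{thm:inv}.

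First I would restate the goal in the form dictated by Definition~\ref{def:leig}: it suffices to verify
\[
v^\top(z)\frac{\partial \tilde F(z)}{\partial z} + \left(\frac{\partial v(z)}{\partial z}\,\tilde F(z)\right)^\top = \lambda(z)\, v^\top(z)
\]
for all $z\in\bR^N$, where $\tilde F$ is the closed-loop field of \eqref{eq:sys_ob}. Since $\tilde F(z) = F(z) + q(z,H(z))$, differentiation is additive over this sum, so $\partial \tilde F(z)/\partial z = \partial F(z)/\partial z + \partial q(z,H(z))/\partial z$ (the $z$-derivatives understood as total derivatives of the composite maps), and likewise $\tfrac{\partial v(z)}{\partial z}\tilde F(z) = \tfrac{\partial v(z)}{\partial z}F(z) + \tfrac{\partial v(z)}{\partial z}q(z,H(z))$.

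Substituting these into the left-hand side and regrouping gives
\[
\left[v^\top(z)\frac{\partial F(z)}{\partial z} + \left(\frac{\partial v(z)}{\partial z}F(z)\right)^\top\right] + \left[v^\top(z)\frac{\partial q(z,H(z))}{\partial z} + \left(\frac{\partial v(z)}{\partial z}q(z,H(z))\right)^\top\right].
\]
The first bracket equals $\lambda(z)v^\top(z)$ because $(\lambda,v)$ is, by assumption, a nonlinear left eigenpair of $\dot z = F(z)$ on $\bR^N$, i.e.\ \eqref{eq:leig} holds with $s = F$. The second bracket is identically $0_{1\times N}$ on $\bR^N$ by the hypothesis \eqref{eq:dinv}. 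Adding the two yields $\lambda(z)v^\top(z)$, which is exactly \eqref{eq:leig} for the closed-loop system, so $(\lambda,v)$ is a nonlinear left eigenpair of \eqref{eq:sys_ob}; the nonvanishing $v(z)\neq 0_{N\times 1}$ carries over verbatim from the open-loop hypothesis.

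There is essentially no hard step here: the entire content is the observation that the map $s\mapsto v^\top\,\partial s/\partial z + (\tfrac{\partial v}{\partial z}s)^\top$ is linear in $s$, so the perturbation by $q$ is handled exactly rather than approximately. The only point that warrants a little care is bookkeeping with the total $z$-derivative of the composite map $z\mapsto q(z,H(z))$: one must use the same total derivative in \eqref{eq:dinv}, in $\partial \tilde F/\partial z$, and in $\tfrac{\partial v}{\partial z}\tilde F$, which is consistent throughout precisely because differentiation distributes over the sum $F + q(\cdot,H(\cdot))$.
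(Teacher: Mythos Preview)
Your proposal is correct and follows essentially the same approach as the paper: both use the linearity of the left-eigenvector relation in the vector field to split the closed-loop condition into the open-loop eigenpair identity plus the hypothesis~\eqref{eq:dinv}, which sum to $\lambda(z)v^\top(z)$. Your write-up is in fact more detailed than the paper's, which simply adds \eqref{eq:leig} and \eqref{eq:dinv} in one line.
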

\begin{proof}
It follows from~\eqref{eq:leig}, i.e.,
\begin{align*}
v^\top (z)\frac{\partial F(z)}{\partial z} + \left(\frac{\partial v(z)}{\partial z}F(z)\right)^\top = \lambda(z) v^\top (z)
\end{align*}
and~\eqref{eq:dinv} that
\begin{align*}
&v^\top (z)\frac{\partial (F(z) + q(z,H(z)))}{\partial z} \\
&\qquad + \left(\frac{\partial v(z)}{\partial z}(F(z) + q(z,H(z)))\right)^\top  
 = \lambda (z) v^\top (z).
\end{align*}
That completes the proof.
\end{proof}

We apply Theorem~\ref{thm:dmain} to partial observer design. 
Consider an autonomous nonlinear system, given by
\[  \dot x = f(x), \quad
    y = h(x),
    \]
where $f:\bR^n \to \bR^n$ and $h:\bR^n \to \bR^m$ are of class $C^1$ such that $f(0) = 0$ and $h(0)=0$. As an observer dynamics, we consider the following system 
\begin{align}\label{eq:ob}
\dot \xi = f(\xi) + p(\xi, y),
\end{align}
where a class $C^1$ function $p:\bR^n \times \bR^m \to \bR^n$ is such that $p(x,h(x))=0$.

Introducing the error $e := \xi - x$, the interconnected system can be represented as~\eqref{eq:sys_ob} with $z := [\begin{matrix}x & e\end{matrix}]^\top$ and
\begin{subequations}\label{eq:Fq}
\begin{align}
F(z) &:= \begin{bmatrix}f(x) \\ f(x + e) - f(x)\end{bmatrix} \label{eq:F}\\
q(z,y) &:= \begin{bmatrix}0_{n\times 1}\\ p(x + e, y) \end{bmatrix}, \quad
H(z) := \begin{bmatrix}h(x) \\ 0_{m\times 1} \end{bmatrix}.
\end{align}
    \end{subequations}
Then, \eqref{eq:ob} becomes a partial observer if $p(\xi, y)$ is defined such that a part of $e$-dynamics converges to the origin. This is a partial stabilization problem of~\eqref{eq:sys_ob}. Nonlinear left eigenvalues can be used for partial stability analysis of~\eqref{eq:sys_ob}, by reasoning similar to that mentioned for nonlinear right eigenvalues in Section~\ref{sec:rem}.

\begin{example}
We consider the observer design of the following nonlinear system:
\begin{align*}
\left\{\begin{alignedat}{2}
    \dot{x} &= f(x) = \begin{bmatrix}
        -x_1-2x_2-3x_2^2\\
        x_2
    \end{bmatrix}\\
y&= h(x) =x_2.
\end{alignedat}\right.
\end{align*}
For this system, $F(z)$ in~\eqref{eq:F} is
\begin{align*}
F(z) = \begin{bmatrix}
        -x_1-2x_2-3x_2^2\\
        x_2\\
        -e_1-2e_2-6x_2e_2-3e_2^2\\
        e_2
    \end{bmatrix}.
\end{align*}
A nonlinear left eigenvalue and its associated nonlinear left eigenvector of the $e$-dynamics part of the system $\dot z = F(z)$ is 
\begin{align*}
\lambda_{o,1}(z) = -1, \quad
v_{o,1}^\top(z) = \begin{bmatrix}
    0 & 0& 1 & -1 - 2e_2
\end{bmatrix}
\end{align*}
on $\bR^4$.
The objective is to design $p(\xi, y)$ such that $p(x,h(x))=0$, while preserving the nonlinear left eigenvalue $\lambda_{o,1}$ and assigning the other closed-loop nonlinear left eigenvalue of the error dynamics to $-1$.

Step 1: To achieve this, we introduce an exo-system:
\begin{align*}
\dot w = s(w) =  -w
\end{align*}
with a stable nonlinear eigenvalue $\lambda (w) = -1$ and eigenvector $v(w) = 1$ on $\bR$.

Step 2: Substituting~\eqref{eq:pep_ob} and~\eqref{eq:Fq} into the dual Sylvester equation~\eqref{eq:dsyl} leads to
\begin{align*}
    -\rho(z) 
    &= \frac{\partial \rho (z)}{\partial z}\begin{bmatrix}
        -x_1-2x_2-3x_2^2\\
        x_2\\
        -e_1-2e_2-6x_2e_2-3e_2^2\\
        e_2
    \end{bmatrix}\nonumber\\
    &\qquad + \frac{\partial \rho (z)}{\partial z}\begin{bmatrix}
        0_{2\times 1}\\
        p(x+e, h(x))
    \end{bmatrix}.
\end{align*}
To preserve the open-loop nonlinear left eigenvalue $\lambda_{o,1}(z)$, we substitute $\lambda_{o,1}(z)$ and $v_{o,1}(z)$ into \eqref{eq:dinv}, which yields
\begin{align*}
&\begin{bmatrix}
    0 & 0& 1 & -1 - 2e_2
\end{bmatrix}
\begin{bmatrix}0_{2\times2} & 0_{2\times2} \\ \displaystyle \frac{\partial p(x + e, h(x))}{\partial x} & \displaystyle \frac{\partial p(x + e, h(x))}{\partial e} \end{bmatrix} \nonumber\\
&+ \begin{bmatrix}
        0 & 0 & 0 & p_2(x+e, h(x))
    \end{bmatrix} = 0_{1\times 4}.
\end{align*}

Step 3: Set $p (\xi, y)$ as 
\begin{align*}
p(\xi,y) = 
\begin{bmatrix}
4\xi_2^2+2\xi_2-2y\xi_2-2y^2-2y\\
-2\xi_2+2y,
\end{bmatrix}
\end{align*}
which satisfies $p(x,h(x))= 0$ and the dual Sylvester equation with  
$
\rho(z) = \begin{bmatrix}
    0 & 0  &e_1+e_2^2 & e_2
\end{bmatrix}^\top.
$
This design results in the closed-loop system \eqref{eq:sys_ob} as
\begin{align*}
    \begin{bmatrix}
        \dot{x}_1\\\dot{x}_2\\\dot{e}_1\\\dot{e}_2
    \end{bmatrix} = \begin{bmatrix}
    -1-2x_2-3x_2^2\\
    x_2\\
    -e_1+e_2^2\\
    -e_2
\end{bmatrix},
\end{align*}
which now possesses the nonlinear left eigenstructure:
\begin{align*}
\lambda_{c,1}(z) &= -1, & 
    v_{c,1}^\top(z) &= \begin{bmatrix}
    0&0&1&-1-2e_2
\end{bmatrix}\\
\lambda_{c,2}(z) &= -1, & 
    v_{c,2}^\top(z) &= \begin{bmatrix}
    0&0&0&1
\end{bmatrix}.
\end{align*}
Thus, partial nonlinear left eigenvalue assignment is achieved while preserving the open-loop nonlinear left eigenvalue $\lambda_{o,1}(x)$ and the associated nonlinear left eigenvector $v_{o,1}^\top(x)$. 

In addition, since $v_{c,1}(z)$ and $v_{c,2}(z)$ are linearly independent and integrable on $\bR^4$, the $e$-dynamics converge to zero as $t \rightarrow \infty$. The resulting observer is
\begin{align*}
    \dot{\xi} &= \begin{bmatrix}
        -\xi_1-2\xi_2-3\xi_2^2\\
        \xi_2
    \end{bmatrix}  
      + 2 \begin{bmatrix}
2\xi_2^2+\xi_2-y\xi_2-y^2-y\\
-\xi_2+y
    \end{bmatrix}.
\end{align*}
Figure~\ref{fig:ob} shows the trajectories of the system and the designed observer, which shows $\xi \to x$ as $t \to \infty$. 

\begin{figure}[t]
    \centering
    \includegraphics[width=0.45\textwidth]{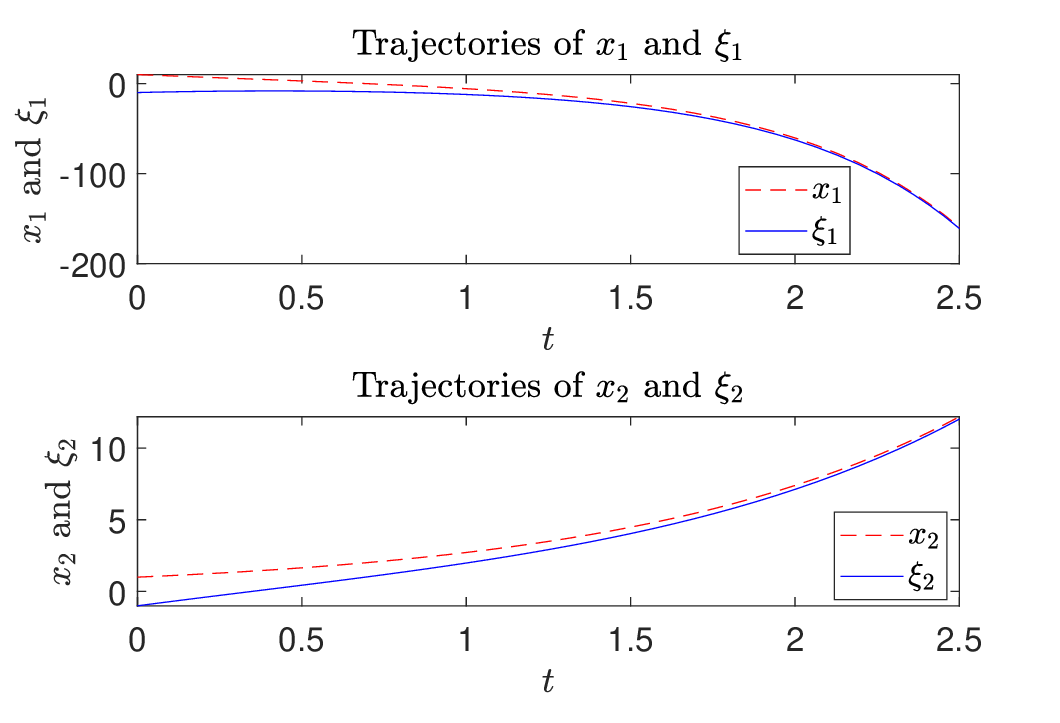}
    \caption{Trajectories of the states of the system $x$ and its observer $\xi$}
    \label{fig:ob}
\end{figure}

\end{example}



\section{Conclusion}
\label{sec:conclusion}
In this paper, we have proposed novel methods for partial nonlinear eigenvalue assignment of nonlinear systems using nonlinear Sylvester equations derived from immersion conditions. First, we have addressed a partial nonlinear eigenvalue assignment problem related to partial stabilization via state feedback. In particular, we have shown that a subset of the nonlinear right eigenvalues of the closed-loop system can be assigned to a set of specified scalar-valued functions by solving a nonlinear Sylvester equation. We have also derived conditions under which an open-loop nonlinear eigenvalue remains unchanged. 
Then, we studied the dual problem of left nonlinear eigenvalue assignment, showing that a subset of nonlinear left eigenvalues can likewise be assigned through a dual Sylvester equation.

\addtolength{\textheight}{-12cm}   






\bibliographystyle{IEEEtran}
\bibliography{mybib}

\begin{thebibliography}{10}
\providecommand{\url}[1]{#1}
\csname url@samestyle\endcsname
\providecommand{\newblock}{\relax}
\providecommand{\bibinfo}[2]{#2}
\providecommand{\BIBentrySTDinterwordspacing}{\spaceskip=0pt\relax}
\providecommand{\BIBentryALTinterwordstretchfactor}{4}
\providecommand{\BIBentryALTinterwordspacing}{\spaceskip=\fontdimen2\font plus
\BIBentryALTinterwordstretchfactor\fontdimen3\font minus \fontdimen4\font\relax}
\providecommand{\BIBforeignlanguage}[2]{{%
\expandafter\ifx\csname l@#1\endcsname\relax
\typeout{** WARNING: IEEEtran.bst: No hyphenation pattern has been}%
\typeout{** loaded for the language `#1'. Using the pattern for}%
\typeout{** the default language instead.}%
\else
\language=\csname l@#1\endcsname
\fi
#2}}
\providecommand{\BIBdecl}{\relax}
\BIBdecl

\bibitem{ackermann1972entwurf}
J.~Ackermann, ``{Der Entwurf linearer Regelungssysteme im Zustandsraum},'' \emph{Automatisierungstechnik}, vol.~20, no. 1-12, pp. 297--300, 1972.

\bibitem{nordstrom1997multi}
K.~Nordstrom and H.~Norlander, ``On the multi input pole placement control problem,'' in \emph{Proceedings of the 1997 IEEE Conference on Decision and Control (CDC)}, 1997, pp. 4288--4293.

\bibitem{BHATTACHARYYA1982pole}
S.~Bhattacharyya and E.~{de Souza}, ``Pole assignment via {S}ylvester's equation,'' \emph{Systems \& Control Letters}, vol.~1, no.~4, pp. 261--263, 1982.

\bibitem{ram2013multiple}
Y.~Ram and J.~E. Mottershead, ``Multiple-input active vibration control by partial pole placement using the method of receptances,'' \emph{Mechanical Systems and Signal Processing}, vol.~40, no.~2, pp. 727--735, 2013.

\bibitem{datta2010partial}
S.~Datta, B.~Chaudhuri, and D.~Chakraborty, ``Partial pole placement with minimum norm controller,'' in \emph{Proceedings of the 2010 IEEE Conference on Decision and Control (CDC)}, 2010, pp. 5001--5006.

\bibitem{datta2012partial}
S.~Datta, D.~Chakraborty, and B.~Chaudhuri, ``Partial pole placement with controller optimization,'' \emph{IEEE Transactions on Automatic Control}, vol.~57, no.~4, pp. 1051--1056, 2012.

\bibitem{wang2024partial}
S.~Wang, X.~Cheng, and P.~van Heijster, ``Partial closed-loop pole assignment via {S}ylvester equation for linear time-invariant systems,'' in \emph{Proceedings of the 2024 IEEE Conference on Decision and Control}, 2024, pp. 1388--1393.

\bibitem{rao1981advanced}
W.~Ra{\'o}, \emph{Advanced process control}.\hskip 1em plus 0.5em minus 0.4em\relax McGraw-Hill: New York, NY, USA, 1981.

\bibitem{zhu2002pole}
Q.~Zhu and L.~Guo, ``A pole placement controller for non-linear dynamic plants,'' \emph{Proceedings of the Institution of Mechanical Engineers, Part I: Journal of Systems and Control Engineering}, vol. 216, no.~6, pp. 467--476, 2002.

\bibitem{tomas2004pole}
M.~Tomas-Rodriguez and S.~Banks, ``Pole placement for nonlinear systems,'' \emph{IFAC Proceedings Volumes}, vol.~37, no.~13, pp. 1171--1175, 2004.

\bibitem{KAWANO2013606}
Y.~Kawano and T.~Ohtsuka, ``Observability analysis of nonlinear systems using pseudo-linear transformation,'' \emph{IFAC Proceedings Volumes}, vol.~46, no.~23, pp. 606--611, 2013.

\bibitem{halas2013definition}
M.~Hal{\'a}s and C.~H. Moog, ``Definition of eigenvalues for a nonlinear system,'' \emph{IFAC Proceedings Volumes}, vol.~46, no.~23, pp. 600--605, 2013.

\bibitem{kawano2015stability}
Y.~Kawano and T.~Ohtsuka, ``Stability criteria with nonlinear eigenvalues for diagonalizable nonlinear systems,'' \emph{Systems \& Control Letters}, vol.~86, pp. 41--47, 2015.

\bibitem{kawano2017pbh}
------, ``{PBH} tests for nonlinear systems,'' \emph{Automatica}, vol.~80, pp. 135--142, 2017.

\bibitem{kawano2017nonlinear}
------, ``Nonlinear eigenvalue approach to differential {R}iccati equations for contraction analysis,'' \emph{IEEE Transactions on Automatic Control}, vol.~62, no.~12, pp. 6497--6504, 2017.

\bibitem{kazantzis1998nonlinear}
N.~Kazantzis and C.~Kravaris, ``Nonlinear observer design using {L}yapunov’s auxiliary theorem,'' \emph{Systems \& Control Letters}, vol.~34, no.~5, pp. 241--247, 1998.

\bibitem{astolfi2003immersion}
A.~Astolfi and R.~Ortega, ``Immersion and invariance: a new tool for stabilization and adaptive control of nonlinear systems,'' \emph{IEEE Transactions on Automatic Control}, vol.~48, no.~4, pp. 590--606, 2003.

\bibitem{isidori1990output}
A.~Isidori and C.~I. Byrnes, ``Output regulation of nonlinear systems,'' \emph{IEEE Transactions on Automatic Control}, vol.~35, no.~2, pp. 131--140, 1990.

\bibitem{bartels1972solution}
R.~H. Bartels and G.~W. Stewart, ``Solution of the matrix equation {$AX+XB=C$},'' \emph{Communications of the ACM}, vol.~15, no.~9, pp. 820--826, 1972.

\bibitem{souza1981controllability}
E.~de~Souza and S.~Bhattacharyya, ``Controllability, observability and the solution of {$AX-XB=C$},'' \emph{Linear Algebra and Its Applications}, vol.~39, pp. 167--188, 1981.

\bibitem{scarciotti2024interconnection}
G.~Scarciotti and A.~Astolfi, ``Interconnection-based model order reduction-a survey,'' \emph{European Journal of Control}, vol.~75, p. 100929, 2024.

\bibitem{lyapunov1992general}
A.~Lyapunov, \emph{General Problem of the Stability of Motion}, ser. Control Theory and Applications Series.\hskip 1em plus 0.5em minus 0.4em\relax Taylor \& Francis, 1992.

\end{thebibliography}

\end{document}